\newtheorem{example}[theorem]{Example}
\crefname{hypothesis}{Hypothesis}{Hypotheses}
\title{Invariance Pressure  of Control Sets\thanks{Submitted to the editors DATE.
\funding{This work was partly supported by
	Network NT8, OEA ICTP and and PICME/CAPES.}}}
\author{Fritz Colonius\thanks{Institut f\"{u}r Mathematik, Universit\"{a}t Augsburg, Augsburg, Germany 
  (\email{fritz.colonius@math.uni-augsburg.de}, \url{https://appa.math.uni-augsburg.de/Mitarbeiter/fc/}).}
\and Jo\~{a}o A.N. Cossich\thanks{Department of Mathematics, State University of Maringa, Maringa, Brazil 
  (\email{joaocossich\_@hotmail.com }, \email{ajsantana@uem.br}).}
\and Alexandre J. Santana\footnotemark[3]}
\ifpdf \hypersetup{
  pdftitle={Invariance Pressure  of Control Sets},
  pdfauthor={F. Colonius, J. A. N. Cossich, and A. J. Santana}
} \fi
\begin{document}

\maketitle

\begin{abstract}
  The invariance pressure of continuous-time control systems
with initial states in a set $K$ which are to be kept in a set $Q$
is introduced and a number of results are derived, mainly for the
case where $Q$ is a control set.
\end{abstract}

\begin{keywords}
  invariance pressure, invariance entropy, control systems
\end{keywords}

\begin{AMS}
  93C15, 37B40,94A17
\end{AMS}

\section{Introduction}

This paper generalizes the notion of invariance pressure introduced by the
present authors in \cite{Cocosa} and improves its characterization for linear
control systems. We now admit initial states in a subset $K$ of the set $Q$ in
which the system should remain. Then we derive a number of results for special
sets $Q$, in particular, for control sets.

Invariance pressure is a generalization of invariance entropy for control
systems by introducing a potential, similarly as topological pressure
generalizes topological entropy for dynamical systems, cf. e.g. Walters
\cite{Walt82}. Basic references for invariance entropy include Nair, Evans,
Mareels and Moran \cite{NEMM04}, Colonius and Kawan \cite{ColKa09a} and the
monograph Kawan \cite{Kawa13}; cf. also Huang and Zhong \cite{HuanZ18} for
relations to dimension theory and Da Silva \cite{daSil14} for the case of
linear systems on Lie groups.. This concept, similarly as other entropy
concepts for control systems, like estimation entropy (Liberzon and Mitra
\cite{LibeM18}) and restoration entropy (Matveev and Pogromsky \cite{PogrM16})
are introduced in order to analyze the minimal data rates in control tasks
when data rate constraints are present (cf. also the monograph Matveev and
Savkin \cite{MatvS09}).

As main results we characterize\textbf{ }the invariance pressure for the
control set of linear control systems and for inner control sets we can show
that the limit superior in the definition of invariance pressure can be
replaced by the limit inferior.

The contents of this paper are as follows: After preliminaries on control
systems and definitions of invariance pressures for admissible pairs $(K,Q)$ in
the sense of Kawan \cite{Kawa13} in Section \ref{Section2}, Section 3 shows
several basic dynamical properties of invariance pressure, in particular, its
behavior under conjugacies. Section \ref{Section4} shows that the outer
invariance pressure and the invariance pressure coincide if the set $Q$ is
isolated, and the outer invariance pressure is discussed for sets $Q$ which
satisfy the no-return property.\ In particular, the invariance pressure is
independent of the choice of a compact subset $K$ with nonvoid interior in a
control set $D$. Section \ref{Section5} discusses the outer invariance
pressure for inner control sets. Finally, Section \ref{Section6} derives an
estimate for the invariance pressure of the control set of a linear control system.

\section{Preliminaries\label{Section2}}

In this section, we establish some notation and basic concepts for control
systems which will be used throughout the paper.

\subsection{Control systems}

A continuous-time control system on a smooth manifold $M$ is defined as a
system
\[
\Sigma\ :\ \dot{x}(t)=F(x(t),\omega(t)),\ \omega\in\mathcal{U},
\]
where $\mathcal{U}:=\{\omega:\mathbb{R}\rightarrow U;\ \omega
\mbox{ is measurable with }\omega(t)\in U\mbox{ almost everywhere}\}$ is a set
of \textbf{\textit{admissible control functions}}, the \textbf{\textit{control
range}} $U$ is a compact subset of $\mathbb{R}^{m}$ and $F:M\times
\mathbb{R}^{m}\rightarrow TM$ is a $C^{1}$-map such that for each $u\in U$,
$F_{u}(\cdot):=F(\cdot,u)$ is a vector field on $M$. For each $x\in M$ and
$\omega\in\mathcal{U}$, we suppose that there exists a unique solution
$\varphi(t,x,\omega)$ which is defined for all $t\in\mathbb{R}_{+}=[0,\infty
)$. We usually refer to the solution $\varphi(\cdot,x,\omega)$ as
\textbf{\textit{trajectory}} of $x$ with control function $\omega$. We also
fix a metric $\rho$ on $M$ which is compatible with the topology.

An important case in this paper is the linear control system $(A,B)$ on
$\mathbb{R}^{d}$, which is defined by
\[
\Sigma_{lin}\ :\dot{x}(t)=Ax(t)+B\omega(t),\ \omega\in\mathcal{U},
\]
where $A\in\mathbb{R}^{d\times d}$ and $B\in\mathbb{R}^{d\times m}$. We recall
that the solution of this system is given by
\[
\varphi(t,x,\omega)=e^{tA}x+\int_{0}^{t}e^{(t-s)A}B\omega(s)ds.
\]
We need several notions characterizing controllability properties of subsets
of the state space $M$ of system $\Sigma$.

The \textbf{\textit{positive}} and \textbf{\textit{negative orbits from}}
$x\in M$ are
\[
\mathcal{O}^{+}(x):=\{y\in M; \ \mbox{ there are } t>0 \mbox{ and } \omega
\in\mathcal{U} \mbox{ with } \varphi(t,x,\omega)=y\}
\]
and
\[
\mathcal{O}^{-}(x):=\{y\in M; \ \mbox{ there are } t>0 \mbox{ and } \omega
\in\mathcal{U} \mbox{ with } \varphi(t,y,\omega)=x\},
\]
respectively.

A set $Q\subset M$, is called \textbf{\textit{controlled invariant }}if for
all $x\in D$ there exists $\omega\in\mathcal{U}$ such that $\varphi
(t,x,\omega)\in Q$ for all $t\geq0$. We say that a set $Q\subset M$ satisfies
the \textbf{\textit{no-return property}}, if
\[
\forall x\in Q\ \forall\tau>0\ \forall\omega\in\mathcal{U}:\varphi
(\tau,x,\omega)\in Q\Rightarrow\varphi([0,\tau],x,\omega)\subset Q.
\]
A controlled invariant set $D\subset M$ is called a \textbf{\textit{control
set}} if satisfies $D\subset\overline{\mathcal{O}^{+}(x)}$ for all $x\in D$
(approximate controllability) and $D$ is a maximal controlled invariant set
with this property. Note that for a control set with nonvoid interior the
control set as well as its interior satisfy the no-return property.

\subsection{Invariance pressure}

Now we introduce the main concepts of the paper, the invariance and outer
invariance pressure generalizing the concepts introduced in Colonius, Cossich
and Santana \cite{Cocosa}.

A pair $(K,Q)$ of nonempty subsets of $M$ is called an \textbf{\textit{admissible
pair}} if $K$ is compact and for each $x\in K$ there exists $\omega
\in\mathcal{U}$ such that $\varphi(\mathbb{R}_{+},x,\omega)\subset Q$ (in
particular, $K\subset Q$).

Given an admissible pair $(K,Q)$ and $\tau>0$, we say that $\mathcal{S}%
\subset\mathcal{U}$ is a $(\tau,K,Q)$-\textbf{\textit{spanning set}}, if for
all $x\in K$ there is $\omega\in\mathcal{S}$ with $\varphi(t,x,\omega)\in Q$
for all $t\in[0,\tau]$. Let $C(U,\mathbb{R})$ denote the set of continuous
functions $f:U\rightarrow\mathbb{R}$.

For $f\in C(U,\mathbb{R})$ denote $(S_{\tau}f)(\omega):=\int_{0}^{\tau
}f(\omega(t))dt$ and
\[
a_{\tau}(f,K,Q):=\inf\left\{  \sum_{\omega\in\mathcal{S}}e^{(S_{\tau}%
f)(\omega)};\ \mathcal{S}\text{ }(\tau,K,Q)\text{-spanning}\right\}  .
\]
The \textbf{\textit{invariance pressure}} $P_{inv}(f,K,Q;\Sigma)$ of control
system $\Sigma$ is given by%
\[
P_{inv}(f,K,Q;\Sigma)=\underset{\tau\rightarrow\infty}{\lim\sup}\frac{1}{\tau
}\log a_{\tau}(f,K,Q).
\]
To simplify the notation we use $P_{inv}(f,K,Q)$ when the considered control
system is clear and, if $K=Q$ we omit the argument $K$ and write $a_{\tau
}(f,Q)$ and $P_{inv}(f,Q)$. Note that, in this case, we assume that $Q$ is
compact and controlled invariant.

Given an admissible pair $(K,Q)$ such that $Q$ is closed in $M$, and a metric
$\varrho$ on $M$, we define the \textbf{\textit{outer invariance pressure}} of
$(K,Q)$ by%

\[
P_{out}(f,K,Q)=P_{out}(f,K,Q;\varrho;\Sigma):=\displaystyle\lim_{\varepsilon
\rightarrow0}P_{inv}(f,K,N_{\varepsilon}(Q)),
\]
where $N_{\varepsilon}(Q)=\{y\in M;\ \exists\ x\in Q\mbox{ with }\varrho
(x,y)<\varepsilon\}$ denotes the $\varepsilon$-neighborhood of $Q$.

Note that the limit for $\varepsilon\rightarrow0$ exists and equals the
supremum over $\varepsilon>0$, since from Proposition \ref{prop4} it follows
that the pairs $(K,N_{\varepsilon}(Q))$ are admissible and that $\varepsilon
_{1}<\varepsilon_{2}$ implies $P_{inv}(f,K,N_{\varepsilon_{1}}(Q))\geq
P_{inv}(f,K,N_{\varepsilon_{2}}(Q))$. Furthermore, $P_{out}(f,K,Q)\leq
P_{inv}(f,K,Q)\leq\infty$ for every admissible pair $(K,Q)$ and $f\in
C(U,\mathbb{R})$.

\section{Properties of invariance pressure\label{Section3}}

In the first part of this section we study several properties of the
invariance pressure and outer invariance pressure. In the second part of this
section we show that conjugations preserve the invariance pressure.

\subsection{Dynamical properties}

We say that two metrics $\varrho_{1}$ and $\varrho_{2}$ on $M$ are uniformly
equivalent on $Q$, if for all $\varepsilon>0$ there exists $\delta>0$ such
that for all $x\in Q$ and for all $y\in M$ with $\varrho_{i}(x,y)<\delta$
implies that $\varrho_{j}(x,y)<\varepsilon$, for $i,j=1,2$, $i\neq j$.

The following proposition states that the value of the outer invariance
pressure of $(K,Q)$ does not change when we consider uniformly equivalent
metrics. Since the proof is similar to Kawan \cite[Proposition 2.1.12]%
{Kawa13}, we will omit it.

\begin{proposition}
Let $(K,Q)$ be an admissible pair such that $Q$ is closed in $M$. If
$\varrho_{1}$ and $\varrho_{2}$ are two metrics on $M$ which are uniformly
equivalent on $Q$, then $P_{out}(f,K,Q;\varrho_{1})=P_{out}(f,K,Q;\varrho
_{2})$ for all $f\in C(U,\mathbb{R})$. If $Q$ is compact, then this is
automatically satisfied, and in this case the outer invariance pressure is
independent of the metric.
\end{proposition}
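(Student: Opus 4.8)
The plan is to show that uniform equivalence of the two metrics on $Q$ forces a matching of neighborhood systems at comparable scales, and then to transfer this directly to the spanning sets that define $P_{inv}(f,K,N_{\varepsilon}(Q))$. First I would fix $f\in C(U,\mathbb{R})$ and observe that the only place the metric enters the definition of $P_{out}$ is through the $\varepsilon$-neighborhoods $N_{\varepsilon}(Q)$; since $K$ is fixed and the admissibility of $(K,N_{\varepsilon}(Q))$ is guaranteed by Proposition~\ref{prop4}, it suffices to compare the two families of neighborhoods. The uniform equivalence hypothesis says precisely that for every $\varepsilon>0$ there is $\delta>0$ with: $x\in Q$ and $\varrho_{1}(x,y)<\delta$ imply $\varrho_{2}(x,y)<\varepsilon$, and symmetrically. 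Unwinding the definition of $N_\varepsilon(Q)$, this yields the inclusions $N^{\varrho_{1}}_{\delta}(Q)\subset N^{\varrho_{2}}_{\varepsilon}(Q)$ and, after swapping the roles of the indices, an analogous $N^{\varrho_{2}}_{\delta'}(Q)\subset N^{\varrho_{1}}_{\varepsilon'}(Q)$.

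Next I would use monotonicity of the invariance pressure in the second argument. Given an admissible pair $(K,Q_1)$ with $Q_1\subset Q_2$, any $(\tau,K,Q_1)$-spanning set is automatically $(\tau,K,Q_2)$-spanning (the trajectory condition $\varphi(t,x,\omega)\in Q_1$ for $t\in[0,\tau]$ implies the same with $Q_1$ replaced by the larger $Q_2$), hence $a_\tau(f,K,Q_1)\ge a_\tau(f,K,Q_2)$ and therefore $P_{inv}(f,K,Q_1)\ge P_{inv}(f,K,Q_2)$. Combining this with the inclusions above, for each $\varepsilon>0$ pick the corresponding $\delta=\delta(\varepsilon)$ and conclude
\[
P_{inv}(f,K,N^{\varrho_{2}}_{\varepsilon}(Q))\le P_{inv}(f,K,N^{\varrho_{1}}_{\delta}(Q)).
\]
Letting $\varepsilon\to0$ forces $\delta(\varepsilon)\to0$ (or at least we may take it so, shrinking $\delta$ if necessary), and since the limit of $P_{inv}(f,K,N^{\varrho_{1}}_{\delta}(Q))$ as $\delta\to 0$ exists and equals $P_{out}(f,K,Q;\varrho_{1})$, we get $P_{out}(f,K,Q;\varrho_{2})\le P_{out}(f,K,Q;\varrho_{1})$. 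The reverse inequality follows by the symmetric argument, swapping $\varrho_1$ and $\varrho_2$. This gives the claimed equality.

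For the final sentence, if $Q$ is compact then any two metrics inducing the topology of $M$ are automatically uniformly equivalent on $Q$: restricted to the compact set $Q$, the identity map between the two metric structures is uniformly continuous in both directions (a continuous function on a compact set is uniformly continuous), which is exactly the uniform equivalence condition stated above. Hence the outer invariance pressure does not depend on the choice of metric in the compact case. The main obstacle, such as it is, is purely bookkeeping: one must be careful that the $\varepsilon$-neighborhood is taken with respect to the correct metric on both sides of each inclusion and that the constants $\delta(\varepsilon)$ can be chosen monotone in $\varepsilon$ so that the limits line up; there is no substantive analytic difficulty, which is why the authors defer to the analogous argument in Kawan~\cite[Proposition 2.1.12]{Kawa13}.
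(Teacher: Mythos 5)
Your proof is correct and is essentially the standard argument the paper has in mind (the paper omits the proof, deferring to Kawan's Proposition 2.1.12): the uniform-equivalence hypothesis gives the neighborhood inclusions $N^{\varrho_{1}}_{\delta}(Q)\subset N^{\varrho_{2}}_{\varepsilon}(Q)$ and vice versa, monotonicity of $P_{inv}$ in the second argument (Proposition \ref{prop4}(ii)) transfers these to the pressures, and since the limit in the definition of $P_{out}$ equals the supremum over $\varepsilon>0$, the bookkeeping about choosing $\delta(\varepsilon)$ monotone is in fact unnecessary. One minor imprecision in your last paragraph: the paper's definition of uniform equivalence quantifies over $x\in Q$ and $y\in M$, not $y\in Q$, so ``the identity restricted to the compact set $Q$ is uniformly continuous'' does not literally give the required condition; the standard fix is a compactness/sequence argument (if $x_{n}\in Q$, $y_{n}\in M$ with $\varrho_{1}(x_{n},y_{n})\to 0$ but $\varrho_{2}(x_{n},y_{n})\geq\varepsilon$, pass to a subsequence with $x_{n}\to x\in Q$ and use that both metrics induce the same topology to reach a contradiction).
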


The next proposition shows that we just need finite spanning sets to get
$a_{\tau}(f,K,Q)$ and it is a reformulation of \cite[Proposition 5]{Cocosa}.

\begin{proposition}
\label{finite} Consider an admissible pair $(K,Q)$ with $Q$ open in $M$ and $f\in C(U,\mathbb{R})$.
Then
\[
a_{\tau}(f,K,Q)=\inf\left\{  \sum_{\omega\in\mathcal{S}}e^{(S_{n}f)(\omega
)};\ \mathcal{S}\text{ is a finite }(\tau,K,Q)\mbox{-spanning set}\right\}  .
\]

\end{proposition}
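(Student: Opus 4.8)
The statement is that the infimum defining $a_\tau(f,K,Q)$ is unchanged when we restrict to \emph{finite} spanning sets, provided $Q$ is open. Since every finite $(\tau,K,Q)$-spanning set is in particular a $(\tau,K,Q)$-spanning set, the infimum over finite spanning sets is trivially $\geq a_\tau(f,K,Q)$. So the content is the reverse inequality: given an arbitrary $(\tau,K,Q)$-spanning set $\mathcal{S}$ with $\sum_{\omega\in\mathcal{S}}e^{(S_\tau f)(\omega)}<\infty$ (if no such $\mathcal{S}$ has finite sum, there is nothing to prove), I must extract from it a \emph{finite} subset $\mathcal{S}'\subset\mathcal{S}$ that is still $(\tau,K,Q)$-spanning; then automatically $\sum_{\omega\in\mathcal{S}'}e^{(S_\tau f)(\omega)}\leq\sum_{\omega\in\mathcal{S}}e^{(S_\tau f)(\omega)}$, and taking the infimum over $\mathcal{S}$ gives the claim.

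\textbf{Extracting a finite subcover.} The natural approach is a compactness argument on $K$. For each $\omega\in\mathcal{S}$, consider the set
\[
V_\omega:=\{x\in K:\ \varphi(t,x,\omega)\in Q\text{ for all }t\in[0,\tau]\}.
\]
Because $\mathcal{S}$ is $(\tau,K,Q)$-spanning, $\{V_\omega\}_{\omega\in\mathcal{S}}$ covers $K$, and $K$ is compact. If each $V_\omega$ were open in $K$, a finite subcover would exist and we would be done. The key point where openness of $Q$ enters is precisely here: I would show $V_\omega$ is open (in $K$, equivalently that its complement is closed). Fix $x\in V_\omega$; the map $t\mapsto\varphi(t,x,\omega)$ is continuous on the compact interval $[0,\tau]$, so its image is a compact subset of the open set $Q$, hence lies in $Q$ with some positive "margin." By continuous dependence of the solution $\varphi(t,\cdot,\omega)$ on the initial condition, uniformly for $t\in[0,\tau]$, there is a neighborhood of $x$ whose trajectories under $\omega$ stay within that margin of the original trajectory, hence stay in $Q$ on all of $[0,\tau]$. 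This gives a neighborhood of $x$ contained in $V_\omega$, proving openness.

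\textbf{The main obstacle.} The only genuinely nontrivial ingredient is the uniform (in $t\in[0,\tau]$) continuous dependence of $\varphi(t,x,\omega)$ on $x$ for a \emph{fixed} control function $\omega$, together with the passage from "trajectory in open $Q$" to "trajectory in $Q$ with a uniform positive margin." On a manifold $M$ with metric $\rho$ this is standard for $C^1$ right-hand sides $F$ (the solution map is continuous, and indeed Lipschitz locally), and the compact image of $[0,\tau]$ can be covered by finitely many coordinate charts to make the margin argument rigorous; alternatively, since we may work with a compact neighborhood of the trajectory, one reduces to the Euclidean estimate via Gronwall. I would state this as a standard fact rather than reprove it, and note that this is exactly why the result is "a reformulation of \cite[Proposition 5]{Cocosa}," whose argument carries over verbatim: the potential $f$ and the sum $\sum e^{(S_\tau f)(\omega)}$ play no role in the extraction step, they only ensure that discarding the elements of $\mathcal{S}\setminus\mathcal{S}'$ cannot increase the sum. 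Finally I would remark that the displayed formula in the statement should read $(S_\tau f)(\omega)$ rather than $(S_n f)(\omega)$; with that correction the two infima coincide, completing the proof.
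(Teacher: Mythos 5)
Your proposal is correct and follows essentially the same route as the paper: extract a finite spanning subset of any spanning set via the open cover $\{V_\omega\}$ of the compact set $K$ (the paper compresses this into one sentence citing openness of $Q$, continuity of $\varphi(t,\cdot,\omega)$, and compactness of $K$, while you spell out the uniform-margin argument), then observe that discarding controls can only decrease the sum. Your remark that $(S_n f)$ in the display should read $(S_\tau f)$ is also right; it is a typo carried over in the paper's statement.
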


\begin{proof}
Since $Q$ is open, $\varphi(t,\cdot,\omega)$ is continuous for all $t\in\mathbb{R}$ and
$\omega\in\mathcal{U}$ and $K$ is compact, every $(\tau,K,Q)$-spanning sets
$\mathcal{S}$ admits a finite $(\tau,K,Q)$-spanning subset $\mathcal{S}%
^{\prime}\subset\mathcal{S}$. Now define
\[
\widetilde{a}_{\tau}(f,K,Q):=\inf\left\{  \sum_{\omega\in\mathcal{S}}%
e^{(S_{n}f)(\omega)};\ \mathcal{S}\text{ is a finite }(\tau
,K,Q)\mbox{-spanning set}\right\}  .
\]
Since clearly $a_{\tau}(f,K,Q)\leq\widetilde{a}_{\tau}(f,K,Q)$, we just have
to prove the reverse inequality. Given a $(\tau,K,Q)$-spanning set
$\mathcal{S}$, as shown earlier there is a finite $(\tau,K,Q)$-spanning subset
$\mathcal{S}^{\prime}\subset\mathcal{S}$. Hence $\sum_{\omega\in
\mathcal{S}^{\prime}}e^{(S_{\tau}f)(\omega)}\leq\sum_{\omega\in\mathcal{S}%
}e^{(S_{\tau}f)(\omega)}$, which implies that $\widetilde{a}_{\tau}(f,K,Q)\leq
a_{\tau}(f,K,Q)$.
\end{proof}

The next results of this section show several basic properties that help to
understand the concept of invariance pressure.

\begin{proposition}
\label{prop4} The following assertions hold for an admissible pair $(K,Q)$:

\begin{itemize}
\item[i)] If $0<\tau_{1}<\tau_{2}$ and $f\geq0$, then $a_{\tau_{1}}(f,K,Q)\leq
a_{\tau_{2}}(f,K,Q)$;

\item[ii)] If $Q\subset R$, then $(K,R)$ is admissible and $a_{\tau
}(f,K,Q)\geq a_{\tau}(f,K,R);$ hence $P_{inv}(f,K,Q)\geq P_{inv}(f,K,R);$

\item[iii)] If $L\subset K$ is closed in $M$, then $(L,Q)$ is admissible and
$a_{\tau}(f,L,Q)\leq a_{\tau}(f,K,Q);$ hence $P_{inv}(f,L,Q)\leq
P_{inv}(f,K,Q)$;

\item[iv)] Let $\Sigma^{\prime}:\dot{y}(t)=F^{\prime}(y(t),\omega(t))$ be
another system in $M$ with solution $\varphi^{\prime}$ and a set of admissible
control functions $\mathcal{U}^{\prime}$ containing $\mathcal{U}$ and
$\varphi^{\prime}(t,x,\omega)=\varphi(t,x,\omega)$ whenever $\omega
\in\mathcal{U}$. Then $(K,Q)$ is also admissible for $\Sigma^{\prime}$ and
$P_{inv}(f,K,Q;\Sigma^{\prime})\leq P_{inv}(f,K,Q;\Sigma)$.
\end{itemize}
\end{proposition}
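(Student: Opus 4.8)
The plan is to prove the four assertions separately, each by a direct argument manipulating spanning sets and the quantities $a_\tau(f,K,Q)$, and then passing to the limit in $\tau$ where a statement about $P_{inv}$ is claimed.

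\medskip

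\noindent\textbf{Plan for (i).} Assume $f\geq 0$ and $0<\tau_1<\tau_2$. The key observation is that any $(\tau_2,K,Q)$-spanning set $\mathcal{S}$ is automatically a $(\tau_1,K,Q)$-spanning set, since keeping a trajectory in $Q$ on $[0,\tau_2]$ forces it to stay in $Q$ on the shorter interval $[0,\tau_1]$. Moreover, because $f\geq 0$, for each $\omega$ we have $(S_{\tau_1}f)(\omega)=\int_0^{\tau_1}f(\omega(t))\,dt\leq\int_0^{\tau_2}f(\omega(t))\,dt=(S_{\tau_2}f)(\omega)$, hence $e^{(S_{\tau_1}f)(\omega)}\leq e^{(S_{\tau_2}f)(\omega)}$. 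Summing over $\omega\in\mathcal{S}$ and taking the infimum over all $(\tau_2,K,Q)$-spanning sets gives $a_{\tau_1}(f,K,Q)\leq a_{\tau_2}(f,K,Q)$; the infimum defining $a_{\tau_1}$ only ranges over a larger collection of spanning sets, which can only decrease it further.

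\medskip

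\noindent\textbf{Plan for (ii).} Suppose $Q\subset R$. Admissibility of $(K,R)$ is immediate: the same control functions that keep trajectories from $K$ inside $Q$ keep them inside $R\supset Q$, and $K$ is still compact. For the inequality, observe that every $(\tau,K,Q)$-spanning set is also a $(\tau,K,R)$-spanning set (the condition $\varphi(t,x,\omega)\in Q$ for $t\in[0,\tau]$ implies $\varphi(t,x,\omega)\in R$). Hence the infimum defining $a_\tau(f,K,R)$ is taken over a larger family, giving $a_\tau(f,K,R)\leq a_\tau(f,K,Q)$. Dividing by $\tau$, taking logarithms and passing to $\limsup_{\tau\to\infty}$ yields $P_{inv}(f,K,R)\leq P_{inv}(f,K,Q)$.

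\medskip

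\noindent\textbf{Plan for (iii) and (iv).} For (iii), if $L\subset K$ is closed in $M$ then $L$ is compact (a closed subset of a compact set), and the admissibility condition for $(K,Q)$ restricts to $L$, so $(L,Q)$ is admissible. Any $(\tau,K,Q)$-spanning set $\mathcal{S}$ is in particular a $(\tau,L,Q)$-spanning set, since the spanning property for all $x\in K$ implies it for all $x\in L$; hence $a_\tau(f,L,Q)\leq a_\tau(f,K,Q)$, and the inequality for $P_{inv}$ follows by the same limiting argument. For (iv), the hypotheses say $\mathcal{U}\subset\mathcal{U}'$ and the two flows agree on $\mathcal{U}$; therefore a control $\omega\in\mathcal{U}$ witnessing admissibility of $(K,Q)$ for $\Sigma$ lies in $\mathcal{U}'$ and, since $\varphi'=\varphi$ on it, witnesses admissibility for $\Sigma'$. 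The same reasoning shows that a $(\tau,K,Q)$-spanning set $\mathcal{S}\subset\mathcal{U}$ for $\Sigma$ is a $(\tau,K,Q)$-spanning subset of $\mathcal{U}'$ for $\Sigma'$, and since $S_\tau f$ depends only on $\omega$ (not on which system is used), the sum $\sum_{\omega\in\mathcal{S}}e^{(S_\tau f)(\omega)}$ is unchanged. Thus the collection of spanning sets available for $\Sigma'$ contains (after identification) those for $\Sigma$, so $a_\tau(f,K,Q;\Sigma')\leq a_\tau(f,K,Q;\Sigma)$, and the $P_{inv}$ inequality follows. None of these steps presents a genuine obstacle; the only point requiring minor care is (i), where the sign hypothesis $f\geq 0$ is used essentially to compare $S_{\tau_1}f$ with $S_{\tau_2}f$, and one should note explicitly that the infimum in $a_{\tau_1}$ ranges over a superset of the $(\tau_2,K,Q)$-spanning sets so that the comparison of sums suffices.
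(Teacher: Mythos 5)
Your arguments for all four parts are correct; the paper states this proposition without proof, and your reasoning (comparing the families of spanning sets and the monotonicity of $S_\tau f$ under $f\geq 0$) is exactly the elementary argument the paper implicitly relies on. The only cosmetic slip is in (ii), where one takes the logarithm first and then divides by $\tau$ before passing to the $\limsup$; since both operations are monotone this does not affect the conclusion.
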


\begin{proposition}
\label{propert} The following assertions hold for an admissible pair $(K,Q)$,
functions $f,g\in C(U,\mathbb{R})$ and $c\in\mathbb{R}$:

\begin{itemize}
\item[i)] $P_{inv}(\mathbf{0},K,Q)=h_{inv}(K,Q)$, where $\mathbf{0}$ is the
null function in $C(U,\mathbb{R})$;

\item[ii)] If $f\leq g$, then $P_{inv}(f,K,Q)\leq P_{inv}(g,K,Q)$. In
particular $h_{inv}(K,Q)+\inf f\leq P_{inv}(f,K,Q)\leq h_{inv}(K,Q)+\sup f$;

\item[iii)] $P_{inv}(f+c,K,Q)=P_{inv}(f,K,Q)+c$;

\item[iv)] $|P_{inv}(f,K,Q)-P_{inv}(g,K,Q)|\leq\Vert f-g\Vert_{\infty}$.
\end{itemize}
\end{proposition}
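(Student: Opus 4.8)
The strategy is to push everything back to the level of the quantities $a_\tau(f,K,Q)$ and the functional $S_\tau$, where the inequalities are elementary, and only at the end take $\frac1\tau\log(\cdot)$ and $\limsup_{\tau\to\infty}$. Part (i) is immediate: with $f=\mathbf 0$ we have $(S_\tau\mathbf 0)(\omega)=\int_0^\tau 0\,dt=0$, so $\sum_{\omega\in\mathcal S}e^{(S_\tau\mathbf 0)(\omega)}=\#\mathcal S$, hence $a_\tau(\mathbf 0,K,Q)=\inf\{\#\mathcal S:\mathcal S\ (\tau,K,Q)\text{-spanning}\}=:a_\tau(K,Q)$, the combinatorial quantity defining invariance entropy; taking $\frac1\tau\log$ and $\limsup$ gives $P_{inv}(\mathbf 0,K,Q)=h_{inv}(K,Q)$ by definition. (If the paper's convention allows infinite spanning sets, I would note that the infimum is over finite sets anyway when $Q$ is open, via Proposition \ref{finite}, or simply interpret the cardinality of an infinite set as $+\infty$, which is consistent with $h_{inv}$.)

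For part (ii), if $f\le g$ pointwise on $U$ then for every $\omega\in\mathcal U$ and $\tau>0$ we get $(S_\tau f)(\omega)=\int_0^\tau f(\omega(t))\,dt\le\int_0^\tau g(\omega(t))\,dt=(S_\tau g)(\omega)$, so $e^{(S_\tau f)(\omega)}\le e^{(S_\tau g)(\omega)}$ term by term; summing over any fixed $(\tau,K,Q)$-spanning set $\mathcal S$ and then taking the infimum over such $\mathcal S$ yields $a_\tau(f,K,Q)\le a_\tau(g,K,Q)$, whence $P_{inv}(f,K,Q)\le P_{inv}(g,K,Q)$ after $\frac1\tau\log$ and $\limsup$. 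For the ``in particular'' clause, apply this monotonicity to the constant bounds $\inf f\le f\le\sup f$ together with part (iii) and part (i): $P_{inv}(f,K,Q)\le P_{inv}(\sup f,K,Q)=P_{inv}(\mathbf 0,K,Q)+\sup f=h_{inv}(K,Q)+\sup f$, and symmetrically for the lower bound.

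Part (iii) is the key computational step and everything else leans on it. Fix $c\in\mathbb R$. For any $\omega$ and any $\tau>0$, $(S_\tau(f+c))(\omega)=\int_0^\tau\bigl(f(\omega(t))+c\bigr)dt=(S_\tau f)(\omega)+c\tau$, so $e^{(S_\tau(f+c))(\omega)}=e^{c\tau}e^{(S_\tau f)(\omega)}$. Since $e^{c\tau}$ is a positive constant independent of $\omega$, it factors out of the sum over any spanning set and out of the infimum, giving $a_\tau(f+c,K,Q)=e^{c\tau}a_\tau(f,K,Q)$. Therefore $\frac1\tau\log a_\tau(f+c,K,Q)=c+\frac1\tau\log a_\tau(f,K,Q)$, and taking $\limsup_{\tau\to\infty}$ (the additive constant $c$ passes through the limit superior) yields $P_{inv}(f+c,K,Q)=P_{inv}(f,K,Q)+c$. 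Finally, part (iv): from $g-\|f-g\|_\infty\le f\le g+\|f-g\|_\infty$ and parts (ii)--(iii) we get $P_{inv}(g,K,Q)-\|f-g\|_\infty\le P_{inv}(f,K,Q)\le P_{inv}(g,K,Q)+\|f-g\|_\infty$, i.e. $|P_{inv}(f,K,Q)-P_{inv}(g,K,Q)|\le\|f-g\|_\infty$. The only mild obstacle is bookkeeping the case $P_{inv}=\infty$: the inequalities in (ii) still hold in $[-\infty,\infty]$, and in (iii)--(iv) one checks that if one side is $+\infty$ then so is the other (since $e^{c\tau}a_\tau$ is infinite iff $a_\tau$ is), so the identities and the Lipschitz bound remain valid with the usual conventions.
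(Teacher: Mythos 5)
Your proof is correct and takes essentially the same (standard) route as the paper, which simply declares (i)--(ii) clear from the definitions and defers (iii)--(iv) to Proposition 13(ii) of the earlier paper \cite{Cocosa}; your key identity $a_{\tau}(f+c,K,Q)=e^{c\tau}a_{\tau}(f,K,Q)$ is exactly the intended computation, and your handling of the infinite case and of (iv) via the sandwich $g-\Vert f-g\Vert_{\infty}\leq f\leq g+\Vert f-g\Vert_{\infty}$ is the expected argument.
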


\begin{proof}
(i) and (ii) are clear from the definition of invariance pressure.

The statements (iii) and (iv) follow analogously to Proposition 13 (ii) of
\cite{Cocosa}.
\end{proof}

The next corollaries deal with the finiteness of invariance pressure.

\begin{corollary}
\label{cor7} Consider $f\in C(U,\mathbb{R})$. 

\begin{itemize}
\item[i)] If $Q$ is open, then $a_{\tau}(f,K,Q)$ is finite for all $\tau>0$;

\item[ii)] If $Q$ is a compact controlled invariant set, then $a_{\tau}(f,Q)$
is either finite for all $\tau>0$ or for none.
\end{itemize}
\end{corollary}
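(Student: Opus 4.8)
The plan is to treat the two parts separately. For (i), observe that since $Q$ is open and $(K,Q)$ is admissible, for each $x\in K$ there is a control $\omega_x\in\mathcal{U}$ with $\varphi(\mathbb{R}_+,x,\omega_x)\subset Q$; in particular $\varphi([0,\tau],x,\omega_x)\subset Q$. By continuity of $\varphi(\cdot,\cdot,\omega_x)$ on the compact set $[0,\tau]\times\{x\}$ and openness of $Q$, there is an open neighborhood $V_x$ of $x$ in $M$ such that $\varphi(t,y,\omega_x)\in Q$ for all $t\in[0,\tau]$ and all $y\in V_x$. The sets $\{V_x\}_{x\in K}$ cover the compact set $K$, so finitely many $V_{x_1},\dots,V_{x_k}$ suffice, and then $\mathcal{S}:=\{\omega_{x_1},\dots,\omega_{x_k}\}$ is a finite $(\tau,K,Q)$-spanning set. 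Consequently
\[
a_\tau(f,K,Q)\leq\sum_{j=1}^{k}e^{(S_\tau f)(\omega_{x_j})}\leq k\,e^{\tau\|f\|_\infty}<\infty,
\]
which proves (i).

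For (ii), suppose $a_{\tau_0}(f,Q)<\infty$ for some $\tau_0>0$; I must show $a_\tau(f,Q)<\infty$ for every $\tau>0$. The key mechanism is concatenation of control functions together with the controlled invariance of $Q$. First, because $a_{\tau_0}(f,Q)$ is finite, there is a finite $(\tau_0,Q,Q)$-spanning set $\mathcal{S}_0$; and since $Q$ is controlled invariant, for each endpoint $\varphi(\tau_0,x,\omega)$ with $x\in Q$, $\omega\in\mathcal{S}_0$, one can continue with a control keeping the trajectory in $Q$ for all future time. The concrete step is: given $\tau>0$, pick $n\in\mathbb{N}$ with $n\tau_0\geq\tau$, and build a $(\tau,Q,Q)$-spanning set by concatenating $n$ blocks, each chosen from $\mathcal{S}_0$ applied at the current position, using that $\varphi(\tau_0,Q,\omega)\subset Q$ whenever $\omega$ was selected from a $(\tau_0,Q,Q)$-spanning set. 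This yields a finite $(n\tau_0,Q,Q)$-spanning set of cardinality at most $|\mathcal{S}_0|^{n}$; restricting attention to $[0,\tau]\subset[0,n\tau_0]$ it is also $(\tau,Q,Q)$-spanning. Bounding $(S_\tau f)(\omega)\leq(S_{n\tau_0}f)(\omega)$ is not automatic since $f$ need not be nonnegative, but one has the crude estimate $(S_{n\tau_0}f)(\omega)\leq n\tau_0\|f\|_\infty$, so
\[
a_\tau(f,Q)\leq |\mathcal{S}_0|^{n}\,e^{n\tau_0\|f\|_\infty}<\infty.
\]
Conversely, if $a_\tau(f,Q)=\infty$ for some $\tau$, the contrapositive of what we just proved gives $a_{\tau'}(f,Q)=\infty$ for all $\tau'>0$, establishing the dichotomy.

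The step I expect to be the main obstacle is the bookkeeping in the concatenation argument for (ii): one must check carefully that concatenating controls drawn from a $(\tau_0,Q,Q)$-spanning set indeed produces trajectories staying in $Q$ on the whole interval $[0,n\tau_0]$, which uses that each block keeps its own (shifted) initial point's trajectory in $Q$ for time $\tau_0$ and that the block endpoints lie in $Q$ (controlled invariance of $Q$, plus $Q$ being the target as well as the constraint set). A clean way to handle this is to invoke a concatenation property of $a_\tau$ analogous to a submultiplicativity statement — $a_{\tau_1+\tau_2}(f,Q)\leq a_{\tau_1}(f,Q)\cdot a_{\tau_2}(f,Q)\cdot C$ for a suitable constant absorbing the potential — which is likely already implicit in \cite{Cocosa}; if so, (ii) follows immediately by iteration from finiteness at a single $\tau_0$. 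The remaining details (measurability of the concatenated control, membership in $\mathcal{U}$) are routine given the standing assumptions on $\mathcal{U}$.
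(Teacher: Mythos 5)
Your proof is correct and follows essentially the same route as the paper: the paper sandwiches $a_{\tau}(f,K,Q)$ between $e^{\tau\inf f}r_{inv}(\tau,K,Q)$ and $e^{\tau\sup f}r_{inv}(\tau,K,Q)$ and then cites Kawan's Propositions 2.2 and 2.3(i) for the finiteness of the minimal spanning cardinality $r_{inv}$ when $Q$ is open and for the dichotomy when $Q$ is compact and controlled invariant, and your compactness-plus-continuity argument for (i) and your concatenation argument for (ii) are exactly the proofs of those cited facts. The only cosmetic difference is that you work directly with the weighted sums $a_{\tau}$ (using that each summand is at least $e^{\tau\inf f}>0$, so a spanning set with finite sum is finite) rather than factoring through $r_{inv}$.
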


\begin{proof}
The two statements follow from the inequalities
\[
e^{\tau\inf f}r_{inv}(\tau,K,Q)\leq a_{\tau}(f,K,Q)\leq e^{\tau\sup f}%
r_{inv}(\tau,K,Q)
\]
and \cite[Propositions 2.2 and 2.3(i)]{Kawa13}..
\end{proof}

\begin{remark}
Note that Propositions \ref{finite} and \ref{propert} and Corollary
\ref{cor7}(i) also hold for outer invariance pressure.
\end{remark}

As an immediate consequence, we have the following:

\begin{corollary}
If $f\in C(U,\mathbb{R})$ and $Q$ is compact, then the following assertions
are equivalent:

\begin{itemize}
\item[i)] $P_{inv}(f,Q)$ is finite;

\item[ii)] $a_{\tau}(f,Q)$ is finite for some $\tau$;

\item[iii)] $a_{\tau}(f,Q)$ is finite for all $\tau$.
\end{itemize}
\end{corollary}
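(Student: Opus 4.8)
The plan is to prove the cycle $(i)\Rightarrow(ii)\Leftrightarrow(iii)\Rightarrow(i)$. The equivalence $(ii)\Leftrightarrow(iii)$ is immediate from the dichotomy in Corollary \ref{cor7}(ii): if $a_\tau(f,Q)$ is finite for some $\tau>0$ it is not ``finite for none'', hence finite for all $\tau>0$, and the reverse implication is trivial. For $(i)\Rightarrow(ii)$ I would argue by contraposition: if $(ii)$ fails then, by the same dichotomy, $a_\tau(f,Q)=+\infty$ for every $\tau>0$, so $\tfrac{1}{\tau}\log a_\tau(f,Q)=+\infty$ for every $\tau$ and hence $P_{inv}(f,Q)=\limsup_{\tau\to\infty}\tfrac{1}{\tau}\log a_\tau(f,Q)=+\infty$, so $(i)$ fails. (Note also $a_\tau(f,Q)\ge e^{\tau\inf f}r_{inv}(\tau,Q)\ge e^{\tau\inf f}$, so $P_{inv}(f,Q)$ is never $-\infty$; here ``finite'' means a real number.)

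The only step requiring work is $(iii)\Rightarrow(i)$. The quickest route reuses the estimate
\[
e^{\tau\inf f}\,r_{inv}(\tau,Q)\le a_\tau(f,Q)\le e^{\tau\sup f}\,r_{inv}(\tau,Q),
\]
valid since $U$ is compact and $f$ is continuous, together with the cited results \cite[Propositions 2.2 and 2.3(i)]{Kawa13}: under $(iii)$ the left inequality gives $r_{inv}(\tau,Q)<\infty$ for all $\tau>0$, which forces the invariance entropy $h_{inv}(Q)$ to be finite, and then Proposition \ref{propert}(ii) yields $P_{inv}(f,Q)\le h_{inv}(Q)+\sup f<\infty$, which is $(i)$.

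For a self-contained argument I would instead show that $\tau\mapsto a_\tau(f,Q)$ is submultiplicative. Given $\tau_1,\tau_2>0$, take countable $(\tau_1,Q,Q)$- and $(\tau_2,Q,Q)$-spanning sets $\mathcal S_1,\mathcal S_2$ (countability is automatic once the associated sums are finite); for $\omega^1\in\mathcal S_1$ and $\omega^2\in\mathcal S_2$ let $\omega$ be the control equal to $\omega^1$ on $[0,\tau_1]$ and to $t\mapsto\omega^2(t-\tau_1)$ on $[\tau_1,\tau_1+\tau_2]$. Since $Q$ is controlled invariant, the concatenated trajectory stays in $Q$ on $[0,\tau_1+\tau_2]$, so the set of all such $\omega$ is $(\tau_1+\tau_2,Q,Q)$-spanning, and from $(S_{\tau_1+\tau_2}f)(\omega)=(S_{\tau_1}f)(\omega^1)+(S_{\tau_2}f)(\omega^2)$ its associated sum is at most the product of the sums over $\mathcal S_1$ and $\mathcal S_2$. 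Taking infima gives $a_{\tau_1+\tau_2}(f,Q)\le a_{\tau_1}(f,Q)\,a_{\tau_2}(f,Q)$. Under $(iii)$ the function $\log a_\tau(f,Q)$ is then real-valued and subadditive; fixing $\tau_0>0$, writing $\tau=n\tau_0+r$ with $r\in[0,\tau_0)$, and using $a_r(f,Q)\le e^{\tau_0\max\{\sup f,0\}}r_{inv}(\tau_0,Q)<\infty$ (monotonicity of $r_{inv}$ via Proposition \ref{prop4}(i)), a Fekete-type estimate gives $P_{inv}(f,Q)\le\tfrac{1}{\tau_0}\log a_{\tau_0}(f,Q)<\infty$.

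The main obstacle is precisely $(iii)\Rightarrow(i)$: upgrading finiteness of $a_\tau(f,Q)$ for each fixed $\tau$ to finiteness of the exponential growth rate. In the first route this is delegated to the finiteness results for invariance entropy in \cite{Kawa13}; in the second it rests on the concatenation/submultiplicativity argument and on controlling $a_r(f,Q)$ for small $r$, which is where one must check that $a_r(f,Q)$ stays bounded as $r\downarrow0$.
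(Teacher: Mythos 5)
Your argument is correct and matches the paper's intended (unwritten) justification: the corollary is stated as an immediate consequence of Corollary \ref{cor7}(ii) together with the comparison $e^{\tau\inf f}r_{inv}(\tau,Q)\leq a_{\tau}(f,Q)\leq e^{\tau\sup f}r_{inv}(\tau,Q)$ and Kawan's finiteness results for invariance entropy, which is exactly your first route for $(iii)\Rightarrow(i)$ (and your handling of $(i)\Rightarrow(ii)$ and $(ii)\Leftrightarrow(iii)$ via the dichotomy is the same as the paper's). Your self-contained alternative via submultiplicativity of $\tau\mapsto a_{\tau}(f,Q)$ is also sound, but note that it reproduces the concatenation argument of the proposition that immediately follows this corollary in the paper, so it adds no new idea.
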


\begin{proposition}
If $Q$ is a compact controlled invariant set and $f\in C(U,\mathbb{R})$, then
the function $\tau\mapsto a_{\tau}(f,Q)$ is subadditive and therefore
\[
P_{inv}(f,Q)=\lim_{\tau\rightarrow\infty}\frac{1}{\tau}\log a_{\tau}%
(f,Q)=\inf_{\tau>0}\frac{1}{\tau}\log a_{\tau}(f,Q).
\]

\end{proposition}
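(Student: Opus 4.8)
The plan is to establish the submultiplicative estimate $a_{\tau_1+\tau_2}(f,Q)\le a_{\tau_1}(f,Q)\,a_{\tau_2}(f,Q)$ for all $\tau_1,\tau_2>0$ — equivalently, subadditivity of $\tau\mapsto\log a_\tau(f,Q)$ — and then to invoke the continuous-time version of Fekete's subadditivity lemma. If $a_\tau(f,Q)=+\infty$ for one, hence by Corollary \ref{cor7}(ii) for every, $\tau>0$, then $\frac1\tau\log a_\tau(f,Q)=+\infty$ for all $\tau$ and the displayed equalities hold trivially; so assume henceforth that $a_\tau(f,Q)<\infty$ for all $\tau>0$, in which case $a_\tau(f,Q)$ equals the infimum over finite $(\tau,Q)$-spanning sets (an infinite spanning set has sum $+\infty$, since every summand is $\ge e^{\tau\inf f}>0$).

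First I would record the concatenation of controls: for $\tau>0$ and $\omega_1,\omega_2\in\mathcal U$ let $\omega_1\diamond_\tau\omega_2\in\mathcal U$ be defined by $(\omega_1\diamond_\tau\omega_2)(t)=\omega_1(t)$ on $[0,\tau)$ and $=\omega_2(t-\tau)$ on $[\tau,\infty)$; it is measurable with values in $U$, hence admissible. By uniqueness of solutions one has $\varphi(t,x,\omega_1\diamond_\tau\omega_2)=\varphi(t,x,\omega_1)$ for $t\in[0,\tau]$ and $\varphi(\tau+s,x,\omega_1\diamond_\tau\omega_2)=\varphi\big(s,\varphi(\tau,x,\omega_1),\omega_2\big)$ for $s\ge0$; and splitting the integral in $S_\tau f$ gives $(S_{\tau_1+\tau_2}f)(\omega_1\diamond_{\tau_1}\omega_2)=(S_{\tau_1}f)(\omega_1)+(S_{\tau_2}f)(\omega_2)$.

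Now fix $\tau_1,\tau_2>0$, and let $\mathcal S_1$ be a finite $(\tau_1,Q)$-spanning set and $\mathcal S_2$ a finite $(\tau_2,Q)$-spanning set with $\sum_{\omega\in\mathcal S_i}e^{(S_{\tau_i}f)(\omega)}$ as close to $a_{\tau_i}(f,Q)$ as desired (such sets exist because $Q$ is controlled invariant). I claim $\mathcal S:=\{\omega_1\diamond_{\tau_1}\omega_2:\omega_1\in\mathcal S_1,\ \omega_2\in\mathcal S_2\}$ is $(\tau_1+\tau_2,Q)$-spanning: for $x\in Q$, pick $\omega_1\in\mathcal S_1$ with $\varphi([0,\tau_1],x,\omega_1)\subset Q$, put $y:=\varphi(\tau_1,x,\omega_1)\in Q$, pick $\omega_2\in\mathcal S_2$ with $\varphi([0,\tau_2],y,\omega_2)\subset Q$, and the concatenation identities above give $\varphi([0,\tau_1+\tau_2],x,\omega_1\diamond_{\tau_1}\omega_2)\subset Q$. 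Since a sum of positive terms over the set $\mathcal S$ is at most the corresponding double sum over the index set $\mathcal S_1\times\mathcal S_2$ (the concatenation map need not be injective), additivity of $S_\tau f$ on concatenations gives
\[
a_{\tau_1+\tau_2}(f,Q)\ \le\ \sum_{\omega\in\mathcal S}e^{(S_{\tau_1+\tau_2}f)(\omega)}\ \le\ \Big(\sum_{\omega_1\in\mathcal S_1}e^{(S_{\tau_1}f)(\omega_1)}\Big)\Big(\sum_{\omega_2\in\mathcal S_2}e^{(S_{\tau_2}f)(\omega_2)}\Big),
\]
and letting $\mathcal S_1,\mathcal S_2$ approach the respective infima yields $a_{\tau_1+\tau_2}(f,Q)\le a_{\tau_1}(f,Q)\,a_{\tau_2}(f,Q)$.

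Finally, set $g(\tau):=\log a_\tau(f,Q)$, which by the previous step is subadditive. The continuous-time Fekete lemma then gives $\lim_{\tau\to\infty}g(\tau)/\tau=\inf_{\tau>0}g(\tau)/\tau$, so the $\limsup$ in the definition of $P_{inv}(f,Q)$ is a genuine limit equal to this infimum. Its hypothesis — that $g$ be bounded above on some interval $(0,\delta]$ — holds because $g(\tau)\le\tau\sup f+\log r_{inv}(\tau,Q)$ by the sandwich inequality in the proof of Corollary \ref{cor7}, while $\tau\mapsto r_{inv}(\tau,Q)$ is finite (lower half of that sandwich) and nondecreasing (Proposition \ref{prop4}(i) with $f=\mathbf{0}$), hence bounded on $(0,\delta]$ by $r_{inv}(\delta,Q)$. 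The points needing care are the verification that $\mathcal S$ spans (which hinges on keeping the intermediate point $y$ in $Q$) and the reduction to finite spanning sets; the Fekete step itself is routine once the local upper bound on $g$ is noted.
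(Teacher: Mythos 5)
Your proposal is correct and follows essentially the same route as the paper: concatenate controls from a $(\tau_1,Q)$- and a $(\tau_2,Q)$-spanning set (using controlled invariance and that the intermediate point $\varphi(\tau_1,x,\omega_1)$ lies in $Q$) to get $a_{\tau_1+\tau_2}(f,Q)\le a_{\tau_1}(f,Q)\,a_{\tau_2}(f,Q)$, dispose of the all-infinite case via Corollary~\ref{cor7}(ii), and conclude with the subadditivity lemma. Your extra care about non-injectivity of the concatenation map and the local boundedness hypothesis in Fekete's lemma only fills in details the paper leaves implicit.
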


\begin{proof}
If $a_{\tau}(f,Q)=\infty$ for all $\tau$, the assertion is trivial. Hence, by
Corollary \ref{cor7} (ii) we can assume that $a_{\tau}(f,Q)<\infty$ for all
$\tau$. If we show that $a_{\tau_{1}+\tau_{2}}(f,Q)\leq a_{\tau_{1}}(f,Q)\cdot
a_{\tau_{2}}(f,Q)$ for all $\tau_{1},\tau_{2}>0$, then the result follows from
the subadditivity lemma, see e.g. \cite[Lemma B.7.1]{Kawa13}. To this end,
consider for $j=1,2$ $(\tau_{j},Q)$-spanning sets $\mathcal{S}_{j}$. For
$\omega_{1}\in\mathcal{S}_{1},\omega_{2}\in\mathcal{S}_{2}$ define a control
function $\omega\in\mathcal{U}$ by
\[
\omega(t)=\left\{
\begin{array}
[c]{rcl}%
\omega_{1}(t), & \mbox{if} & t\in\lbrack0,\tau_{1}]\\
\omega_{2}(t-\tau_{1}), & \mbox{if} & t>\tau_{1}%
\end{array}
\right.  .
\]
These functions form a $(\tau_{1}+\tau_{2},Q)$-spanning set. Hence
$a_{\tau_{1}+\tau_{2}}(f,Q)\leq a_{\tau_{1}}(f,Q)\cdot a_{\tau_{2}}(f,Q)$,
which concludes the proof.
\end{proof}

\subsection{Invariance pressure under conjugacy}

Now we show that for systems that are conjugate the respective invariance
pressures coincide.

\begin{definition}
\label{Definition11}Consider two control systems
\[
\Sigma_{1}:\ \dot{x}(t)=F_{1}(x(t),\omega(t))\mbox{ and }\Sigma_{2}:\ \dot
{y}(t)=F_{2}(y(t),\nu(t))
\]
on $M_{1}$ and $M_{2}$, with compact control ranges $U_{1}$ and $U_{2}$, sets
of control functions $\mathcal{U}_{1}$ and $\mathcal{U}_{2}$ and solutions
$\varphi_{1}$ and $\varphi_{2}$, respectively. Let $\pi:\mathbb{R}_{+}\times
M_{1}\rightarrow M_{2}$, $(t,x)\mapsto\pi_{t}(x)$, and $H:U_{1}\rightarrow
U_{2}$ be continuous maps such that the induced map $h_{H}:\mathcal{U}%
_{1}\rightarrow\mathcal{U}_{2}$, $h_{H}(\omega)(t):=H(\omega(t))$ for all
$t\in\mathbb{R}$, satisfies%
\[
\pi_{t}(\varphi_{1}(t,x,\omega))=\varphi_{2}(t,\pi_{0}(x),h_{H}(\omega))\text{
for all }t\in\mathbb{R}_{+},x\in M_{1}\text{ and }\omega\in\mathcal{U}_{1}.
\]
Then $(\pi,H)$ is called a \textbf{\textit{time-variant semi-conjugacy}} from
$\Sigma_{1}$ to $\Sigma_{2}$. If each of the maps $\pi_{t}:M_{1}\rightarrow
M_{2}$ and $H:U_{1}\rightarrow U_{2}$ are homeomorphisms, we call $(\pi,H)$ a
\textbf{\textit{time-variant conjugacy}} from $\Sigma_{1}$ to $\Sigma_{2}$.

Analogously we define a \textbf{\textit{time-invariant \textbf{semi-}%
conjugacy}} and \textbf{\textit{conjugacy}} from $\Sigma_{1}$ to $\Sigma_{2}$
if $\pi$ is independent of $t\in\mathbb{R}_{+}$.
\end{definition}

\begin{proposition}
\label{prop20} Consider two systems as in Definition \ref{Definition11} and
let $(\pi,H)$ be a time-variant semi-conjugacy from $\Sigma_{1}$ to
$\Sigma_{2}$. Further assume that $(K,Q)$ is an admissible pair for
$\Sigma_{1}$ and
\[
\pi_{t}(Q)\subset\pi_{0}(Q)\ \mbox{ for all }t>0.
\]
Then $(\pi_{0}(K),\pi_{0}(Q))$ is an admissible pair for system $\Sigma_{2}$
and
\[
P_{inv}(f\circ H,K,Q;\Sigma_{1})\geq P_{inv}(f,\pi_{0}(K),\pi_{0}%
(Q));\Sigma_{2})
\]
for all $f\in C(U_{2},\mathbb{R})$. Moreover, if $Q$ is compact and the family
$\{\pi_{t}\}_{t\in\mathbb{R}_{+}}$ is pointwise equicontinuous, then
\[
P_{out}(f\circ H,K,Q;\Sigma_{1})\geq P_{out}(f,\pi_{0}(K),\pi_{0}%
(Q));\Sigma_{2})
\]
for all $f\in C(U_{2},\mathbb{R})$.
\end{proposition}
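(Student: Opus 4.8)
The plan is to prove the two inequalities by transporting spanning sets along the semi-conjugacy. First I would establish admissibility of $(\pi_0(K),\pi_0(Q))$ for $\Sigma_2$: given $y\in\pi_0(K)$, write $y=\pi_0(x)$ with $x\in K$, pick $\omega\in\mathcal{U}_1$ with $\varphi_1(\mathbb{R}_+,x,\omega)\subset Q$, and set $\nu:=h_H(\omega)\in\mathcal{U}_2$. Then for $t\ge 0$ the conjugacy identity gives $\varphi_2(t,y,\nu)=\pi_t(\varphi_1(t,x,\omega))\in\pi_t(Q)\subset\pi_0(Q)$, using the hypothesis $\pi_t(Q)\subset\pi_0(Q)$. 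This shows $\varphi_2(\mathbb{R}_+,y,\nu)\subset\pi_0(Q)$, and since $\pi_0$ is continuous and $K$ compact, $\pi_0(K)$ is compact; hence the pair is admissible.

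For the first inequality, fix $\tau>0$ and let $\mathcal{S}\subset\mathcal{U}_1$ be a $(\tau,K,Q)$-spanning set for $\Sigma_1$. I claim $h_H(\mathcal{S}):=\{h_H(\omega):\omega\in\mathcal{S}\}$ is a $(\tau,\pi_0(K),\pi_0(Q))$-spanning set for $\Sigma_2$: for $y=\pi_0(x)\in\pi_0(K)$ choose $\omega\in\mathcal{S}$ with $\varphi_1(t,x,\omega)\in Q$ for $t\in[0,\tau]$; then $\varphi_2(t,y,h_H(\omega))=\pi_t(\varphi_1(t,x,\omega))\in\pi_t(Q)\subset\pi_0(Q)$ for $t\in[0,\tau]$. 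Next, the key computation on the potential: for $f\in C(U_2,\mathbb{R})$ and $\omega\in\mathcal{U}_1$,
\[
(S_\tau f)(h_H(\omega))=\int_0^\tau f(H(\omega(t)))\,dt=\int_0^\tau (f\circ H)(\omega(t))\,dt=(S_\tau(f\circ H))(\omega).
\]
Therefore $\sum_{\nu\in h_H(\mathcal{S})}e^{(S_\tau f)(\nu)}\le\sum_{\omega\in\mathcal{S}}e^{(S_\tau(f\circ H))(\omega)}$ (the inequality, rather than equality, covering the case where $h_H$ is not injective on $\mathcal{S}$, in which case the left side has fewer terms and all summands are positive). Taking the infimum over spanning sets $\mathcal{S}$ gives $a_\tau(f,\pi_0(K),\pi_0(Q);\Sigma_2)\le a_\tau(f\circ H,K,Q;\Sigma_1)$; dividing by $\tau$, taking $\log$, and passing to $\limsup_{\tau\to\infty}$ yields the claimed inequality for $P_{inv}$.

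For the outer version, fix $\varepsilon>0$. The main obstacle is relating the $\varepsilon$-neighborhood of $\pi_0(Q)$ in $M_2$ to a neighborhood of $Q$ in $M_1$ that is itself $\pi_t$-invariant in the required sense. Using pointwise equicontinuity of $\{\pi_t\}_{t\in\mathbb{R}_+}$ together with compactness of $Q$, I would produce $\delta>0$ such that $\pi_t(N_\delta(Q))\subset N_\varepsilon(\pi_0(Q))$ for every $t\ge 0$ (cover $Q$ by finitely many balls on which all $\pi_t$ have oscillation $<\varepsilon/2$, shrink $\delta$ accordingly). Then for $t>0$ one also gets $\pi_t(N_\delta(Q))\subset N_\varepsilon(\pi_0(Q))=N_\varepsilon(\pi_0(Q))$, so the hypothesis $\pi_t(Q')\subset\pi_0(Q')$ of the already-proved $P_{inv}$ statement can be applied — but a cleaner route is to apply the first part directly with $Q$ replaced by $N_\delta(Q)$ and $\pi_0(Q)$ replaced by the larger open set $N_\varepsilon(\pi_0(Q))$, using that $\pi_t(N_\delta(Q))\subset N_\varepsilon(\pi_0(Q))$ plays the role of the containment needed, to obtain
\[
P_{inv}(f\circ H,K,N_\delta(Q);\Sigma_1)\ge P_{inv}(f,\pi_0(K),N_\varepsilon(\pi_0(Q));\Sigma_2).
\]
Since $P_{out}(f\circ H,K,Q;\Sigma_1)\ge P_{inv}(f\circ H,K,N_\delta(Q);\Sigma_1)$ by monotonicity (Proposition \ref{prop4}(ii), as $Q\subset N_\delta(Q)$ — more precisely $P_{out}$ is the sup over shrinking neighborhoods, hence bounded below by any one of them... actually $P_{out}\le P_{inv}$ on each neighborhood, so I must instead let $\delta\to0$: as $\varepsilon\to0$ we may take $\delta=\delta(\varepsilon)\to0$, giving $P_{inv}(f\circ H,K,N_{\delta(\varepsilon)}(Q);\Sigma_1)\ge P_{inv}(f,\pi_0(K),N_\varepsilon(\pi_0(Q));\Sigma_2)$, and letting $\varepsilon\to0$ on the right and noting the left side tends to $P_{out}(f\circ H,K,Q;\Sigma_1)$ by definition of the limit as $\delta\to0$), we conclude $P_{out}(f\circ H,K,Q;\Sigma_1)\ge P_{out}(f,\pi_0(K),\pi_0(Q);\Sigma_2)$. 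The delicate point throughout is the uniform-in-$t$ control of $\pi_t$ on a neighborhood of $Q$, which is exactly where compactness of $Q$ and pointwise equicontinuity combine.
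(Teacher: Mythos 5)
Your proposal is correct and follows essentially the same route as the paper: transport spanning sets via $h_H$, use the identity $(S_\tau f)(h_H(\omega))=(S_\tau(f\circ H))(\omega)$, and for the outer version upgrade pointwise equicontinuity to uniform equicontinuity on the compact set $Q$ to get $\pi_t(N_\delta(Q))\subset N_\varepsilon(\pi_0(Q))$ for all $t$. Your observation that the sum over $h_H(\mathcal{S})$ may only be $\leq$ the sum over $\mathcal{S}$ when $h_H$ is not injective is a small but legitimate refinement of the paper's equality, and the final limit argument, despite the mid-stream hesitation, lands correctly since $P_{out}(f\circ H,K,Q)$ is the supremum over $\delta>0$ of $P_{inv}(f\circ H,K,N_\delta(Q))$.
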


\begin{proof}
In order to show that $(\pi_{0}(K),\pi_{0}(Q))$ is an admissible pair, note
that since $\pi$ is continuous, the set $\pi_{0}(K)$ is compact. Let $y\in
\pi_{0}(K)$, then $y=\pi_{0}(x)$ for some $x\in K$. Since $(K,Q)$ is an
admissible pair, there is $\omega\in\mathcal{U}_{1}$ such that $\varphi
(\mathbb{R}_{+},x,\omega)\subset Q$, and we obtain
\[
\varphi_{2}(t,y,h_{H}(\omega))=\pi_{t}(\varphi_{1}(t,x,\omega))\in\pi
_{t}(Q)\subset\pi_{0}(Q).
\]
Therefore $(\pi_{0}(K),\pi_{0}(Q))$ is an admissible pair for $\Sigma_{2}$.

Now, let $\mathcal{S}\subset\mathcal{U}_{1}$ be a $(\tau,K,Q)$-spanning set.
With the same arguments as above, we find that $h_{H}(\mathcal{S}%
)\subset\mathcal{U}_{2}$ is $(\tau,\pi_{0}(K),\pi_{0}(Q))$-spanning. Hence
\[
\sum_{\mu\in h_{H}(\mathcal{S})}e^{(S_{\tau}f)(\mu)}=\sum_{\omega
\in\mathcal{S}}e^{(S_{\tau}f)(H\circ\omega)}=\sum_{\omega\in\mathcal{S}%
}e^{(S_{\tau}(f\circ H))(\omega)}%
\]
for every $(\tau,K,Q)$-spanning set $\mathcal{S}$, which implies that
\[
a_{\tau}(f,\pi_{0}(K),\pi_{0}(Q))\leq a_{\tau}(f\circ H,K,Q).
\]
Therefore $P_{inv}(f,\pi_{0}(K),\pi_{0}(Q))\leq P_{inv}(f\circ H,K,Q).$

Now assume that $Q$ is compact. Let $\varrho_{1}$ denote a metric on $M_{1}$
and $\varrho_{2}$ a metric on $M_{2}$. By compactness of $Q$, the pointwise
equicontinuity of $\{\pi_{t}\}_{t\in\mathbb{R}_{+}}$ on $Q$ is uniform, hence
for all $\varepsilon>0$, there exists $\delta>0$ such that for all
$t\in\mathbb{R}_{+}$, $x\in Q$ and $y\in M_{1}$ the condition $\varrho
_{1}(x,y)<\delta$ implies $\varrho_{2}(\pi_{t}(x),\pi_{t}(y))<\varepsilon$.

Let $\mathcal{S}\subset\mathcal{U}_{1}$ be a $(\tau,K,N_{\delta}(Q))$-spanning
set with $\delta=\delta(\varepsilon)$ as above. Note that if $y\in\pi_{0}(K)$,
then $y=\pi_{0}(x)$ for some $x\in K$. For $\omega\in\mathcal{S}$ such that
$\varphi_{1}([0,\tau],x,\omega)\subset N_{\delta}(Q)$ and for each
$t\in\lbrack0,\tau]$, there exists $x_{t}\in Q$ with $\varrho_{1}%
(x_{t},\varphi_{1}(t,x,\omega))<\delta$. This implies that for all
$t\in\lbrack0,\tau]$%
\[
\varrho_{2}(\varphi_{2}(t,y,h_{H}(\omega)),\pi_{t}(x_{t}))=\varrho_{2}(\pi
_{t}(\varphi_{1}(t,x,\omega)),\pi_{t}(x_{t}))<\varepsilon.
\]
This shows that $h_{H}(\mathcal{S})\subset\mathcal{U}_{2}$ is a $(\tau,\pi
_{0}(K),N_{\varepsilon}(\pi_{0}(Q)))$-spanning set. We conclude that $a_{\tau
}(f,\pi_{0}(K),N_{\varepsilon}(\pi_{0}(Q)))\leq a_{\tau}(f\circ
H,K,N_{\varepsilon}(Q))$, and hence
\[
P_{out}(f,\pi_{0}(K),\pi_{0}(Q))\leq P_{out}(f\circ H,K,Q).
\]

\end{proof}

\begin{remark}
It is easy to see that if $(\pi,H)$ is a time-variant conjugacy from
$\Sigma_{1}$ to $\Sigma_{2}$, then $(\psi,H^{-1})$ with $\psi_{t}(y):=\pi
_{t}^{-1}(y)$ is a time-variant conjugacy from $\Sigma_{2}$ to $\Sigma_{1}$.
In this case, we have, under the assumptions of the previous proposition,%
\[
P_{inv}(f\circ H,K,Q;\Sigma_{1})=P_{inv}(f,\pi_{0}(K),\pi_{0}(Q));\Sigma
_{2}).
\]
A similar argument holds for time-invariant conjugacies.
\end{remark}

\begin{example}
\label{Example12}Consider two linear control systems $\mathbb{R}^{d}$
\[
\Sigma_{1}:\ \dot{x}(t)=A_{1}x(t)+B_{1}\omega(t)\mbox{ and }\ \Sigma
_{2}:\ \dot{x}(t)=A_{2}x(t)+B_{2}\omega(t),
\]
where $\omega(t)$ is in a compact set $U\subset\mathbb{R}^{m}$ for all
$t\in\mathbb{R}$, $A_{i}\in\mathbb{R}^{d\times d}$ and $B_{i}\in
\mathbb{R}^{d\times m}$ for $i=1,2$. If there is a nonsingular $d\times d$
matrix $T$ such that $A_{2}=TA_{1}T^{-1}$ and $B_{2}=TB_{1}$, then
$(T,id_{U})$ is a time-invariant conjugacy from $\Sigma_{1}$ to $\Sigma_{2}$.
In fact
\begin{align*}
T\left(  \varphi_{1}(t,x,\omega)\right)   &  =T\left(  e^{tA_{1}}x+\int
_{0}^{t}e^{(t-s)A_{1}}B_{1}\omega(s)ds\right) \\
&  =T\left(  e^{tT^{-1}A_{2}T}x+\int_{0}^{t}e^{(t-s)T^{-1}A_{2}T}T^{-1}%
B_{2}\omega(s)ds\right) \\
&  =T\left(  T^{-1}e^{tA_{2}}Tx+\int_{0}^{t}T^{-1}e^{(t-s)A_{2}}TT^{-1}%
B_{2}\omega(s)ds\right) \\
&  =e^{tA_{2}}Tx+\int_{0}^{t}e^{(t-s)A_{2}}B_{2}\omega(s)ds=\varphi
_{2}(t,Tx,h_{id_{U}}(\omega)).
\end{align*}
In this case, it follows for every admissible pair $(K,Q)$, and all $f\in
C(U,\mathbb{R})$%
\[
P_{inv}(f,K,Q;\Sigma_{1})=P_{inv}(f,T(K),T(Q);\Sigma_{2}).
\]

\end{example}

\section{Outer invariance pressure on special sets\label{Section4}}

In this section, we study the outer invariance pressure in isolated sets and
in sets satisfying the no-return property. In the first case, we will see that
the limit for $\varepsilon\rightarrow0$ in the definition of $P_{out}(K,Q)$
becomes superfluous and in the second case we will obtain that the limit
superior in this definition can be replaced by a limit inferior.

We assume that the system $\Sigma$ satisfies the following additional properties:

\begin{itemize}
\item[1)] The set $\mathcal{U}$ of admissible control functions is endowed
with a topology that makes it a sequentially compact space, that is, every
sequence in $\mathcal{U}$ has a convergent subsequence;

\item[2)] The solution map $\varphi:\mathbb{R}_{+}\times M\times
\mathcal{U}\rightarrow M$ is continuous when $\mathcal{U}$ is endowed with the
above topology.
\end{itemize}

These properties are satisfied in particular for a control-affine system of
the form
\[
\dot{x}(t)=X_{0}(x)+\sum_{i=1}^{m}u_{i}X_{i}(x),
\]
where $X_{0},X_{1},\dotsc,X_{m}$ are $C^{1}$ vector fields and $u=(u_{1}%
,\dotsc,u_{m})\in U\subset\mathbb{R}^{m}$ with $U$ compact and convex. Then
the appropriate topology on $\mathcal{U}$ is the weak$^{\ast}$-topology of
$L^{\infty}(\mathbb{R};\mathbb{R}^{m})=L^{1}(\mathbb{R};\mathbb{R}^{m})^{\ast
}$.

A compact set $Q\subset M$ is called \textbf{\textit{isolated}} if there
exists $\delta_{0}>0$ such that for all $(x,\omega)\in\overline{N_{\delta_{0}%
}(Q)}\times\mathcal{U}$ the following implication holds:
\begin{equation}
\varphi(\mathbb{R}_{+},x,\omega)\subset\overline{N_{\delta_{0}}(Q)}%
\Rightarrow\varphi(\mathbb{R}_{+},x,\omega)\subset Q. \label{isolated}%
\end{equation}

\begin{proposition}
Let $(K,Q)$ be an admissible pair such that $Q$ is compact and isolated with
constant $\delta_{0}$. Then it holds, for all $f\in C(U,\mathbb{R})$
\[
P_{out}(f,K,Q)=P_{inv}(f,K,N_{\varepsilon}(Q))\text{ for all }\varepsilon
\in(0,\delta_{0}],
\]

\end{proposition}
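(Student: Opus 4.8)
The statement is monotone in $\varepsilon$ in one direction for free: by Proposition \ref{prop4}(ii), $\varepsilon_1<\varepsilon_2$ gives $P_{inv}(f,K,N_{\varepsilon_1}(Q))\ge P_{inv}(f,K,N_{\varepsilon_2}(Q))$, and by definition $P_{out}(f,K,Q)=\lim_{\varepsilon\to 0}P_{inv}(f,K,N_{\varepsilon}(Q))=\sup_{\varepsilon>0}P_{inv}(f,K,N_{\varepsilon}(Q))$. So what remains is to show that $P_{inv}(f,K,N_{\varepsilon}(Q))$ is \emph{constant} for $\varepsilon\in(0,\delta_0]$; equivalently, $P_{inv}(f,K,N_{\varepsilon}(Q))\ge P_{inv}(f,K,N_{\delta_0}(Q))$ for every such $\varepsilon$ would suffice together with monotonicity, but in fact I will argue the single inequality $P_{inv}(f,K,N_{\delta}(Q))\le P_{inv}(f,K,N_{\varepsilon}(Q))$ for arbitrary $0<\varepsilon<\delta\le\delta_0$, which combined with Proposition \ref{prop4}(ii) pins the value down and forces it to equal $P_{out}$.

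The plan is to compare spanning sets for the two neighborhoods using the isolation property. Fix $0<\varepsilon<\delta\le\delta_0$. Let $\mathcal{S}$ be a $(\tau,K,N_{\varepsilon}(Q))$-spanning set; a priori this only forces trajectories to stay in $N_{\varepsilon}(Q)$ on $[0,\tau]$, so to use isolation (which is a statement about $\mathbb{R}_+$) I first replace each $\omega\in\mathcal{S}$, used for a given $x\in K$, by a control that agrees with $\omega$ on $[0,\tau]$ and thereafter follows a control keeping $\varphi(\cdot,x,\cdot)$ in $Q$ for all future time — such a control exists because $(K,Q)$ is admissible and, by controlled invariance together with the no-return-type structure, one can concatenate at time $\tau$ (the state $\varphi(\tau,x,\omega)$ lies in $N_\varepsilon(Q)\subset\overline{N_{\delta_0}(Q)}$, and from admissibility applied after relocating... more carefully: one should show the endpoint can be steered to stay inside $\overline{N_{\delta_0}(Q)}$ forever, hence by \eqref{isolated} actually inside $Q$). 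This is the delicate point and I expect it to be the main obstacle: one needs that a trajectory which is in $N_\varepsilon(Q)$ on $[0,\tau]$ can be extended to one that remains in $\overline{N_{\delta_0}(Q)}$ for all $t\ge 0$, so that isolation upgrades ``stays near $Q$'' to ``stays in $Q$.''

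Once that extension is in hand, the key claim is: if $\omega$ makes $\varphi([0,\tau],x,\omega)\subset N_{\varepsilon}(Q)$ and the extension keeps the trajectory in $\overline{N_{\delta_0}(Q)}$ for all $t\ge\tau$, then in fact $\varphi([0,\tau],x,\omega)\subset Q\subset N_{\delta}(Q)$ — wait, this would give the stronger conclusion $\varphi([0,\tau],x,\omega)\subset Q$ directly, and hence the same $\mathcal{S}$ (after extension, which does not change $S_\tau f$ since the weight $(S_\tau f)(\omega)=\int_0^\tau f(\omega(t))\,dt$ depends only on $\omega|_{[0,\tau]}$) serves as a $(\tau,K,N_{\delta}(Q))$-spanning set with the \emph{same} sum $\sum_{\omega}e^{(S_\tau f)(\omega)}$. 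This yields $a_{\tau}(f,K,N_{\delta}(Q))\le a_{\tau}(f,K,N_{\varepsilon}(Q))$, hence $P_{inv}(f,K,N_{\delta}(Q))\le P_{inv}(f,K,N_{\varepsilon}(Q))$, and the reverse already holds by Proposition \ref{prop4}(ii), so they are equal; letting $\delta=\delta_0$ or simply noting all values coincide, this common value is $P_{out}(f,K,Q)$. The subtlety to handle with care, using the sequential compactness of $\mathcal{U}$ and continuity of $\varphi$ (properties 1) and 2)), is precisely the existence of the infinite-horizon extension staying in $\overline{N_{\delta_0}(Q)}$: one takes controls $\omega^{(n)}$ keeping the trajectory in $N_\varepsilon(Q)$ on $[0,n]$ after the initial segment on $[0,\tau]$, extracts a convergent subsequence $\omega^{(n)}\to\omega^{*}$, and uses continuity of the solution map to conclude $\varphi(\mathbb{R}_+,x,\omega^*)\subset\overline{N_{\varepsilon}(Q)}\subset\overline{N_{\delta_0}(Q)}$, whence \eqref{isolated} applies.
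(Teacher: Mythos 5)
Your reduction to showing that $P_{inv}(f,K,N_{\varepsilon}(Q))$ is constant on $(0,\delta_{0}]$ is the right target, but the argument you sketch does not reach it, for two reasons. First, and most seriously, the ``delicate point'' you flag is not merely delicate --- it is impossible in general. You need, for each spanning control $\omega$ and each $x\in K$, an extension of $\omega|_{[0,\tau]}$ whose trajectory stays in $\overline{N_{\delta_{0}}(Q)}$ for \emph{all} $t\geq 0$. But by the isolation property \eqref{isolated} itself, any such extension forces the \emph{entire} trajectory, including the segment on $[0,\tau]$, into $Q$; so the extension can exist only for those controls whose trajectory already stays in $Q$ on $[0,\tau]$, which is exactly the conclusion you are trying to establish. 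For a control whose trajectory visits $N_{\varepsilon}(Q)\setminus Q$ at some $t_{0}\in[0,\tau]$, no admissible infinite-horizon extension confined to $\overline{N_{\delta_{0}}(Q)}$ exists, and your compactness argument cannot manufacture one: the controls $\omega^{(n)}$ keeping the trajectory in $N_{\varepsilon}(Q)$ on $[\tau,\tau+n]$ are not supplied by anything (the spanning property controls only $[0,\tau]$, neither $Q$ nor $N_{\varepsilon}(Q)$ is assumed controlled invariant here, and admissibility of $(K,Q)$ only applies to points of $K$). Second, there is a direction error: the inequality you end up with, $a_{\tau}(f,K,N_{\delta}(Q))\leq a_{\tau}(f,K,N_{\varepsilon}(Q))$ for $\varepsilon<\delta$, is exactly what Proposition \ref{prop4}(ii) already gives (a $(\tau,K,N_{\varepsilon}(Q))$-spanning set is trivially $(\tau,K,N_{\delta}(Q))$-spanning since $N_{\varepsilon}(Q)\subset N_{\delta}(Q)$); the substantive direction is $P_{inv}(f,K,N_{\varepsilon_{1}}(Q))\leq P_{inv}(f,K,N_{\varepsilon_{2}}(Q))$ for $\varepsilon_{1}<\varepsilon_{2}\leq\delta_{0}$, i.e.\ producing spanning sets for the \emph{smaller} neighborhood from spanning sets for the larger one.

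The paper avoids the infinite-horizon issue entirely by extracting a \emph{finite-time} consequence of isolation (following Kawan, Proposition 2.2.17): for every $\rho>0$ and $\varepsilon\in(0,\delta_{0}]$ there is $n\in\mathbb{N}$ such that for all $(x,\omega)\in\overline{N_{\delta_{0}}(Q)}\times\mathcal{U}$, $\max_{t\in[0,n]}\mathrm{dist}(\varphi(t,x,\omega),Q)\leq\varepsilon$ implies $\mathrm{dist}(x,Q)<\rho$. This is where the sequential compactness of $\mathcal{U}$ and continuity of $\varphi$ are actually used, via a contradiction/subsequence argument applied to the time-$n$ windows. Then a $(n+\tau,K,N_{\varepsilon_{2}}(Q))$-spanning set is shown to be $(\tau,K,N_{\varepsilon_{1}}(Q))$-spanning, because for each $s\in[0,\tau]$ the shifted trajectory stays $\varepsilon_{2}$-close to $Q$ on a window of length $n$, forcing $\varphi(s,x,\omega_{x})\in N_{\varepsilon_{1}}(Q)$. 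The bounded time offset $n$ disappears in the exponential growth rate (after reducing to $f\geq 0$ via Proposition \ref{propert}(iii)), yielding the hard inequality. If you want to repair your argument, this finite-window reformulation of isolation is the ingredient to aim for, rather than an infinite-horizon extension.
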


\begin{proof}
Since $M$ is locally compact, by Kawan \cite[ Lemma A.4.2]{Kawa13} we may
assume that $\delta_{0}$ is small enough that $\overline{N_{\delta_{0}}(Q)}$
is compact, since assumption (\ref{isolated}) is also satisfied for smaller
$\delta_{0}$.

By an argument similar to \cite[Proposition 2.2.17]{Kawa13}, we can see that
for all $\rho>0$ and for all $\varepsilon\in(0,\delta_{0}]$ there is
$n\in\mathbb{N}$ such that for all $(x,\omega)\in\overline{N_{\delta_{0}}%
(Q)}\times\mathcal{U}$ $\max_{t\in\lbrack0,n]}\mbox{dist}(\varphi
(t,x,\omega),Q)\leq\varepsilon$ implies $\mbox{dist}(x,Q)<\rho$.

Now let $0<\varepsilon_{1}<\varepsilon_{2}\leq\delta_{0}$. Then there exists
$n\in\mathbb{N}$ such that for all $(x,\omega)\in\overline{N_{\delta_{0}}%
(Q)}\times\mathcal{U}$ it holds that $\max_{t\in\lbrack0,n]}%
\mbox{dist}(\varphi(t,x,\omega),Q)\leq\varepsilon_{2}$ implies
$\mbox{dist}(x,Q)<\varepsilon_{1}$. For arbitrary $\tau>0$, let $\mathcal{S}$
be a $(n+\tau,K,N_{\varepsilon_{2}}(Q))$-spanning set. For $x\in K$, there
exists $\omega_{x}\in\mathcal{S}$ with $\varphi([0,n+\tau],x,\omega
_{x})\subset N_{\varepsilon_{2}}(Q).$ For every $s\in\lbrack0,\tau]$, we
obtain $\max_{t\in\lbrack0,n]}\mbox{dist}(\varphi(t,\varphi(s,x,\omega
_{x}),\Theta_{s}\omega_{x}),Q)=\max_{t\in\lbrack0,n]}\mbox{dist}(\varphi
(t+s,x,\omega_{x}),Q)<\varepsilon_{2}$. Hence we have $\mbox{dist}(\varphi
(s,x,\omega_{x}),Q)<\varepsilon_{1}$ for all$\ s\in\lbrack0,\tau]$, which
implies that $\mathcal{S}$ is a $(\tau,K,N_{\varepsilon_{1}}(Q))$-spanning
set. Therefore, given $g\in C(U,\mathbb{R})$, $g\geq0$, we get
\[
a_{\tau}(g,K,N_{\varepsilon_{1}}(Q))\leq a_{n+\tau}(g,K,N_{\varepsilon_{2}%
}(Q)),\ \forall\tau>0,
\]
which implies $P_{inv}(f,K,N_{\varepsilon_{1}}(Q))\leq P_{inv}%
(f,K,N_{\varepsilon_{2}}(Q))$, for all $f\in C(U,\mathbb{R})$.

By Proposition \ref{prop4} (ii) we have $P_{inv}(f,K,N_{\varepsilon_{2}%
}(Q))\leq P_{inv}(f,K,N_{\varepsilon_{1}}(Q))$ and the proof is complete.
\end{proof}

\begin{proposition}
Let $Q\subset M$ be a set with the no-return property. Assume that $(K_{1},Q)$
and $(K_{2},Q)$ are two admissible pairs such that $K_{2}$ has a nonempty
interior and
\[
\forall x\in K_{1}~\exists\omega_{x}\in\mathcal{U}~\exists\tau_{x}%
>0:\varphi(\tau_{x},x,\omega_{x})\in\mathrm{int}K.
\]
Then for all $f\in C(U,\mathbb{R})$%
\[
P_{inv}(f,K_{1},Q)\leq P_{inv}(f,K_{2},Q).
\]

\end{proposition}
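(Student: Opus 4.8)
The plan is to use the compactness of $K_1$ to replace the pointwise steering hypothesis by a \emph{finite} family of steering controls, to \emph{concatenate} each of these with an arbitrary $(\tau,K_2,Q)$-spanning set, and to invoke the no-return property of $Q$ to guarantee that the prepended initial arcs of the resulting trajectories do not leave $Q$. This produces, from a $(\tau,K_2,Q)$-spanning set, a $(\tau,K_1,Q)$-spanning set whose total potential is larger only by a factor that is bounded uniformly in $\tau$, which is exactly what makes the limit-superior estimate go through.

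First I would choose, for each $x\in K_1$, a control $\omega_x\in\mathcal U$ and a time $\tau_x>0$ with $\varphi(\tau_x,x,\omega_x)\in\mathrm{int}\,K_2$ (we read $\mathrm{int}\,K$ as $\mathrm{int}\,K_2$ in the hypothesis). Since $\varphi(\tau_x,\cdot,\omega_x)$ is continuous, the set $V_x:=\{y\in M:\varphi(\tau_x,y,\omega_x)\in\mathrm{int}\,K_2\}$ is an open neighbourhood of $x$, and $\{V_x\}_{x\in K_1}$ is an open cover of the compact set $K_1$. Passing to a finite subcover $V_{x_1},\dots,V_{x_k}$ and abbreviating $\omega_i:=\omega_{x_i}$, $\tau_i:=\tau_{x_i}$, $T:=\max_{1\le i\le k}\tau_i$, I obtain: every $x\in K_1$ lies in some $V_{x_i}$, and then $\varphi(\tau_i,x,\omega_i)\in\mathrm{int}\,K_2\subset K_2\subset Q$, the last inclusion since $(K_2,Q)$ is admissible.

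The key point is that for such an $x$ the whole steering arc stays in $Q$: since $(K_1,Q)$ is admissible, $x\in K_1\subset Q$; since $\varphi(\tau_i,x,\omega_i)\in Q$ and $\tau_i>0$, the no-return property of $Q$ gives $\varphi([0,\tau_i],x,\omega_i)\subset Q$. Now fix $\tau>T$, let $\mathcal S$ be any $(\tau,K_2,Q)$-spanning set, and for $1\le i\le k$ and $\nu\in\mathcal S$ define $\omega_{i,\nu}\in\mathcal U$ by $\omega_{i,\nu}(t)=\omega_i(t)$ on $[0,\tau_i]$ and $\omega_{i,\nu}(t)=\nu(t-\tau_i)$ for $t>\tau_i$; put $\mathcal R:=\{\omega_{i,\nu}:1\le i\le k,\ \nu\in\mathcal S\}$. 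Given $x\in K_1$, pick $i$ with $x\in V_{x_i}$, set $z:=\varphi(\tau_i,x,\omega_i)\in K_2$, and pick $\nu\in\mathcal S$ with $\varphi([0,\tau],z,\nu)\subset Q$; then by uniqueness of solutions $\varphi(\cdot,x,\omega_{i,\nu})$ equals $\varphi(\cdot,x,\omega_i)\subset Q$ on $[0,\tau_i]$ and equals $\varphi(\cdot-\tau_i,z,\nu)\subset Q$ on $[\tau_i,\tau_i+\tau]$, so $\varphi([0,\tau],x,\omega_{i,\nu})\subset Q$; hence $\mathcal R$ is $(\tau,K_1,Q)$-spanning.

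It remains to compare potentials. For $\tau>T$ one has $(S_\tau f)(\omega_{i,\nu})=\int_0^{\tau_i}f(\omega_i(t))\,dt+(S_{\tau-\tau_i}f)(\nu)\le(S_\tau f)(\nu)+2T\|f\|_\infty$, using $0\le\tau_i\le T$; hence $\sum_{\omega\in\mathcal R}e^{(S_\tau f)(\omega)}\le\sum_{i=1}^k\sum_{\nu\in\mathcal S}e^{(S_\tau f)(\omega_{i,\nu})}\le k\,e^{2T\|f\|_\infty}\sum_{\nu\in\mathcal S}e^{(S_\tau f)(\nu)}$. Taking the infimum over all $(\tau,K_2,Q)$-spanning sets $\mathcal S$ yields $a_\tau(f,K_1,Q)\le k\,e^{2T\|f\|_\infty}\,a_\tau(f,K_2,Q)$ for every $\tau>T$ (trivially so when the right-hand side is $+\infty$), whence $\frac1\tau\log a_\tau(f,K_1,Q)\le\frac{\log k+2T\|f\|_\infty}{\tau}+\frac1\tau\log a_\tau(f,K_2,Q)$; letting $\tau\to\infty$ and taking the limit superior gives $P_{inv}(f,K_1,Q)\le P_{inv}(f,K_2,Q)$. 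The only genuinely delicate step is keeping the prepended steering arcs inside $Q$, which is precisely where the no-return property is used, and why it matters that each steering control starts in $K_1\subset Q$ and ends in $K_2\subset Q$; the compactness covering and the bookkeeping of the potentials are routine.
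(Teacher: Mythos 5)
Your proof is correct and follows essentially the same route as the paper: a finite subcover of $K_1$ by steering neighbourhoods, concatenation of the steering controls with a $(\tau,K_2,Q)$-spanning set, and the no-return property to keep the prepended arcs in $Q$. Your bookkeeping is in fact slightly cleaner, since the two-sided bound $(S_\tau f)(\omega_{i,\nu})\leq (S_\tau f)(\nu)+2T\Vert f\Vert_\infty$ handles arbitrary $f$ directly, whereas the paper first proves the estimate for $f\geq 0$ (using monotonicity of $\tau\mapsto a_\tau$) and then reduces the general case via the shift $f\mapsto f-\inf f$.
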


\begin{proof}
Note that if there exists $\tau_{0}$ such that $a_{\tau}(f,K_{2},Q)=+\infty$
for all $\tau\geq\tau_{0}$, then $P_{inv}(f,K_{2},Q)=+\infty$ and hence the
assertion holds.

If this is not the case, we can get a sequence $\tau_{k}\rightarrow\infty$
such that $a_{\tau_{k}}(f,K_{2},Q)$ is finite for all $k$. For all $x\in
K_{1}$, let $\omega_{x}\in\mathcal{U}$ and $\tau_{x}>0$ as in the assumption.
Since $\varphi(\tau_{x},\cdot,\omega_{x})$ is continuous, we find, for every
$x\in K_{1}$, an open neighborhood $V_{x}$ of $x$ such that $\varphi(\tau
_{x},V_{x},\omega_{x})\subset\mathrm{int}K_{2}$. By the no-return property of
$Q$, we have $\varphi([0,\tau_{x}],y,\omega_{x})\subset Q$, for all $y\in
K_{1}\cap V_{x}$. The family $\{V_{x}\}_{x\in K_{1}}$ is an open cover of
$K_{1}$ and by compactness there exist $x_{1},\dotsc,x_{n}\in K_{1}$ with
$K_{1}\subset\cup_{i=1}^{n}V_{x_{i}}$. Now, let $\mathcal{S}:=\{\mu_{1}%
,\dotsc,\mu_{k}\}$ be a finite $(\tau,K_{2},Q)$-spanning set, for some
$\tau>\tau_{M}-\tau_{m}$, where $\tau_{M}:=\max_{1\leq i\leq n}\tau_{x_{i}}$
and $\tau_{m}:=\min_{1\leq i\leq n}\tau_{x_{i}}$.

For every index pair $(i,j)$ with $1\leq i\leq n$, $1\leq j\leq k$ such that
there exists $x\in K_{1}$ with $y_{x}:=\varphi(\tau_{x_{i}},x,\omega_{x_{i}%
})\in\mathrm{int}K_{2}$ and $\varphi([0,\tau],y_{x},\mu_{j})\subset Q$, we can
define a control function
\[
\nu_{ij}(t)=\left\{
\begin{array}
[c]{rcl}%
\omega_{x_{i}}(t), & \mbox{if} & t\in\lbrack0,\tau_{x_{i}}]\\
\mu_{j}(t-\tau_{x_{i}}), & \mbox{if} & t>\tau_{x_{i}}%
\end{array}
\right.  .
\]
Define the set $\widetilde{\mathcal{S}}$ of all these control functions. Let
$\widetilde{\tau}:=\tau+\tau_{m}$, hence $\tau\geq\widetilde{\tau}-\tau_{M}$.
Then $\widetilde{\mathcal{S}}$ is a $(\widetilde{\tau},K_{1},Q)$-spanning set
by construction, and consequently, for all $f\in C(U,\mathbb{R})$, $f\geq0$,
we have
\begin{align*}
(S_{\widetilde{\tau}}f)(\nu_{ij})  &  =(S_{\tau_{x_{i}}}f)(\omega_{x_{i}%
})+\int_{\tau_{x_{i}}}^{\widetilde{\tau}}f(\mu_{j}(t-\tau_{x_{i}}))dt\\
&  =(S_{\tau_{x_{i}}}f)(\omega_{x_{i}})+\int_{0}^{\widetilde{\tau}-\tau
_{x_{i}}}f(\mu_{j}(t))dt\\
&  =(S_{\tau_{x_{i}}}f)(\omega_{x_{i}})+(S_{\widetilde{\tau}-\tau_{x_{i}}%
}f)(\mu_{j})\\
&  \leq(S_{\tau_{x_{i}}}f)(\omega_{x_{i}})+(S_{\tau}f)(\mu_{j}).
\end{align*}
Hence
\begin{align*}
a_{\tau}(f,K_{1},Q)  &  \leq a_{\widetilde{\tau}}(f,K_{1},Q)\leq\sum_{\nu
_{ij}\in\widetilde{\mathcal{S}}}e^{(S_{\widetilde{\tau}}f)(\nu_{ij})}\leq
\sum_{1\leq i\leq n,\ \mu\in\mathcal{S}}e^{(S_{\tau_{x_{i}}}f)(\omega_{x_{i}%
})}e^{(S_{\tau}f)(\mu)}\\
&  \leq\sum_{1\leq i\leq n}e^{(S_{\tau_{x_{i}}}f)(\omega_{x_{i}})}\cdot
\sum_{\mu\in\mathcal{S}}e^{(S_{\tau}f))(\mu)}\leq ne^{\Vert f\Vert\tau_{M}%
}\sum_{\mu\in\mathcal{S}}e^{(S_{\tau}f(\mu)},
\end{align*}
because $0\leq\widetilde{\tau}-\tau_{x_{i}}\leq\tau$. Since this inequality
holds for all finite $(\tau,K_{2},Q)$-spanning sets, we have
\[
a_{\tau}(f,K_{1},Q)\leq ne^{\Vert f\Vert\tau_{M}}a_{\tau}(f,K_{2}%
,Q),\ \tau>\tau_{M}-\tau_{m}.
\]
Therefore, we obtain for all $f\in C(U,\mathbb{R})$,$f\geq0$,%
\[
P_{inv}(f,K_{1},Q)\leq P_{inv}(f,K_{2},Q).
\]
Now consider an arbitrary $f\in C(U,\mathbb{R})$. Then $\tilde{f}\in
C(U,\mathbb{R})$ given by $\tilde{f}(u)=f(u)-\inf f$ satisfies $\tilde{f}%
\geq0$. Using Proposition \ref{propert} (iii) it follows that
\begin{align*}
P_{inv}(f,K_{1},Q)  &  =P_{inv}(\tilde{f},K_{1},Q)+\inf_{u\in U}f(u)\leq
P_{inv}(\tilde{f},K_{2},Q)+\inf_{u\in U}f(u)\\
&  =P_{inv}(f,K_{2},Q)-\inf_{u\in U}f(u)+\inf_{u\in U}f(u)\\
&  =P_{inv}(f,K_{2},Q).
\end{align*}

\end{proof}

\begin{corollary}
\label{cor1} Let $D\subset M$ be a control set and let $K_{1},K_{2}\subset D$
be two compact subsets with nonempty interior. Then $(K_{1},D)$ and
$(K_{2},D)$ are admissible pairs and for all $f\in C(U,\mathbb{R})$ we have
\[
P_{inv}(f,K_{1},D)=P_{inv}(f,K_{2},D).
\]

\end{corollary}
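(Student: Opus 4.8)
The plan is to deduce Corollary \ref{cor1} directly from the preceding proposition by checking that each pair $(K_i,D)$ meets its hypotheses and that the required reachability condition holds symmetrically in both directions. First I would recall that for a control set $D$ and a compact set $K\subset D$ with nonempty interior, the pair $(K,D)$ is admissible: every point of $D$ lies in a controlled invariant set, so there is a control keeping the trajectory in $D$ for all positive times, giving $\varphi(\mathbb{R}_+,x,\omega)\subset D$. Second, since $D$ is a control set with nonvoid interior, both $D$ and $\mathrm{int}\,D$ satisfy the no-return property (as noted at the end of Section \ref{Section2}), so the set $Q:=D$ in the proposition has the no-return property. This sets the stage for applying the proposition with $(K_1,D)$ in the role of $(K_1,Q)$ and $(K_2,D)$ in the role of $(K_2,Q)$, and then again with the roles of $K_1$ and $K_2$ exchanged.

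The third and central step is to verify the reachability hypothesis: for all $x\in K_1$ there exist $\omega_x\in\mathcal{U}$ and $\tau_x>0$ with $\varphi(\tau_x,x,\omega_x)\in\mathrm{int}\,K_2$. Since $K_2\subset D$ has nonempty interior, pick any point $y_0\in\mathrm{int}\,K_2\subset\mathrm{int}\,D$. Approximate controllability of $D$ gives $D\subset\overline{\mathcal{O}^+(x)}$ for every $x\in D$; combined with the fact that interior points of a control set are exactly reached (one uses that through interior points the system is locally accessible, so $\mathcal{O}^+(x)$ is open and its closure containing $y_0\in\mathrm{int}\,D$ forces $y_0$ itself, or a nearby point still inside $\mathrm{int}\,K_2$, to be exactly reachable), we obtain $\omega_x,\tau_x$ with $\varphi(\tau_x,x,\omega_x)\in\mathrm{int}\,K_2$. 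Thus the proposition applies and yields $P_{inv}(f,K_1,D)\le P_{inv}(f,K_2,D)$. Swapping the roles of $K_1$ and $K_2$ — which is legitimate because $K_1$ also has nonempty interior — gives the reverse inequality $P_{inv}(f,K_2,D)\le P_{inv}(f,K_1,D)$, and together these give the claimed equality for all $f\in C(U,\mathbb{R})$.

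The main obstacle is the exact-reachability claim in the third step: the definition of control set only gives \emph{approximate} controllability, $D\subset\overline{\mathcal{O}^+(x)}$, not $\mathrm{int}\,D\subset\mathcal{O}^+(x)$. Upgrading this to hitting $\mathrm{int}\,K_2$ exactly requires an openness/accessibility argument — typically that from any $x\in D$ one can first steer approximately close to $y_0$ (landing inside the open set $\mathrm{int}\,K_2$), which suffices since we only need to land \emph{somewhere} in $\mathrm{int}\,K_2$, not at $y_0$ precisely. So in fact approximate controllability into the open set $\mathrm{int}\,K_2$ is exactly what is needed and is immediate from $y_0\in\mathrm{int}\,K_2\subset D\subset\overline{\mathcal{O}^+(x)}$: there is a reachable point within any prescribed distance of $y_0$, in particular one inside $\mathrm{int}\,K_2$. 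This observation removes the obstacle and makes the proof short.

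\begin{proof}
By the definition of a control set, $D$ is controlled invariant, so for each $x\in D$ there is $\omega\in\mathcal{U}$ with $\varphi(\mathbb{R}_+,x,\omega)\subset D$; since $K_1,K_2\subset D$ are compact, the pairs $(K_1,D)$ and $(K_2,D)$ are admissible. As noted in Section \ref{Section2}, a control set with nonvoid interior has the no-return property, so $Q:=D$ satisfies the no-return property.

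Fix $i,j\in\{1,2\}$ with $i\neq j$ and choose $y_0\in\mathrm{int}\,K_j\subset D$. Given $x\in K_i\subset D$, approximate controllability yields $y_0\in D\subset\overline{\mathcal{O}^+(x)}$, so there exist $\tau_x>0$ and $\omega_x\in\mathcal{U}$ with $\varphi(\tau_x,x,\omega_x)$ arbitrarily close to $y_0$; since $\mathrm{int}\,K_j$ is open, we may take $\varphi(\tau_x,x,\omega_x)\in\mathrm{int}\,K_j$. Hence the hypotheses of the previous proposition hold with $K_1$ replaced by $K_i$ and $K_2$ replaced by $K_j$, giving $P_{inv}(f,K_i,D)\le P_{inv}(f,K_j,D)$ for all $f\in C(U,\mathbb{R})$.

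Applying this with $(i,j)=(1,2)$ and then with $(i,j)=(2,1)$, which is permitted since both $K_1$ and $K_2$ have nonempty interior, we obtain $P_{inv}(f,K_1,D)\le P_{inv}(f,K_2,D)$ and $P_{inv}(f,K_2,D)\le P_{inv}(f,K_1,D)$, hence $P_{inv}(f,K_1,D)=P_{inv}(f,K_2,D)$ for all $f\in C(U,\mathbb{R})$.
\end{proof}
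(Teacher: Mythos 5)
Your proof is correct and follows the same route as the paper, which simply invokes the preceding proposition after noting that control sets with nonvoid interior satisfy the no-return property. You have merely made explicit the verifications the paper leaves implicit (admissibility, and that approximate controllability into the open set $\mathrm{int}\,K_j$ already yields the required reachability hypothesis), and your resolution of the exact-versus-approximate reachability point is the right one.
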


\begin{proof}
This follows, since control sets with nonvoid interior satisfy the no-return property.
\end{proof}

\section{Outer invariance pressure for inner control sets\label{Section5}}

In this section, we will show that the limit superior in the definition of
invariance pressure of a control set can be replaced by the limit inferior, if
certain controllability properties near the control set are satisfied.

A control set $D$ $\subset M$ is called an \textbf{\textit{inner control set}}
if there exists an increasing family of compact and convex sets $\{U_{\rho
}\}_{\rho\in\lbrack0,1]}$ in $\mathbb{R}^{m}$ (i.e., $U_{\rho_{1}}\subset
U_{\rho_{2}}$ for $\rho_{1}<\rho_{2}$), such that for every $\rho\in
\lbrack0,1]$ the system $\Sigma$ with control range $U_{\rho}$ (instead of
$U$) has a control set $D_{\rho}$ with nonvoid interior and compact closure,
and the following conditions are satisfied:

\begin{itemize}
\item[i)] $U=U_{0}$ and $D=D_{0}$;

\item[ii)] $\overline{D_{\rho_{1}}}\subset\mathrm{int}(D_{\rho_{2}})$ whenever
$\rho_{1}<\rho_{2}$;

\item[iii)] For every neighborhood $W$ of $\overline{D}$ there is $\rho
\in\lbrack0,1)$ with $\overline{D_{\rho}}\subset W$.
\end{itemize}

This notion (slightly modified) is taken from Kawan \cite[Definition
2.6]{Kawa13}. Below, we will consider an inner control set $D=D_{0}$
(corresponding to the control range $U=U_{0}$) and characterize the invariance
pressure of the controlled invariant set $Q=\overline{D}$ with respect to the
larger control range $U_{1}\supset U_{0}$.

The following result shows that for admissible pairs $(K,Q)$ where $Q$ is the
closure of an inner control set, the limit superior in the definition of outer
invariance pressure can be replaced by the limit inferior. The proof follows
\cite[Proposition 2.16]{Kawa13}.

\begin{proposition}
Let $Q$ be the closure of an inner control set $D$ of a system $\Sigma$. Then
for every compact set $K\subset D$, the pair $(K,Q)$ is admissible for the
system with control range $U_{1}$ and if $\mathrm{int}K\neq\emptyset$ we have
\[
P_{out}(f,K,Q)=\lim_{\varepsilon\searrow0}\liminf_{\tau\rightarrow\infty}%
\frac{1}{\tau}\log a_{\tau}(f,K,N_{\varepsilon}(Q))\text{ for every }f\in
C(U,\mathbb{R}).
\]

\end{proposition}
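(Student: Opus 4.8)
The plan is to prove the statement in two parts: admissibility of the pair $(K,Q)$ for the control range $U_1$, and then the replacement of $\limsup$ by $\liminf$ in the definition of $P_{out}$. The first part is the easier one: since $K\subset D=D_0$ and $D_0$ is a control set (hence controlled invariant) for the system with control range $U_0\subset U_1$, every $x\in K$ admits $\omega\in\mathcal{U}_0\subset\mathcal{U}_1$ with $\varphi(\mathbb{R}_+,x,\omega)\subset D_0\subset\overline{D}=Q$; this gives admissibility, and $K$ is compact by hypothesis.

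For the main assertion, first note that the inequality $P_{out}(f,K,Q)\geq\lim_{\varepsilon\searrow0}\liminf_{\tau\to\infty}\frac1\tau\log a_\tau(f,K,N_\varepsilon(Q))$ is immediate, since $\limsup\geq\liminf$ for each fixed $\varepsilon$ and both monotone limits in $\varepsilon$ exist (using Proposition \ref{prop4}(ii)). The real content is the reverse inequality. Here I would follow the strategy of Kawan's Proposition 2.16: exploit condition (iii) in the definition of an inner control set to find, for a given small $\varepsilon>0$, an intermediate control set $D_\rho$ with $\overline{D_\rho}\subset N_\varepsilon(Q)$, and exploit the approximate controllability and openness of $\mathrm{int}(D_\rho)$ to convert a long spanning set at scale $\varepsilon$ into spanning sets at many time-lengths. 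Concretely, since $\overline{D_{\rho_1}}\subset\mathrm{int}(D_{\rho_2})$ for $\rho_1<\rho_2$ and $K\subset D_0\subset\mathrm{int}(D_\rho)$, each point of $K$ can be steered in finite time into $\mathrm{int}(D_\rho)$ while staying inside $N_\varepsilon(Q)$ (by the no-return property of the closure of a control set with nonvoid interior, which $\overline{D_\rho}$ enjoys). One then glues the initial ``pull-in'' controls to a finite $(\tau,K',N_\varepsilon(Q))$-spanning set, where $K'$ is a fixed compact subset of $\mathrm{int}(D_\rho)$ with nonempty interior, much as in the proof of the previous Proposition on the no-return property. The potential contributes only a bounded additive error $\|f\|_\infty\,\tau_M$ over the (uniformly bounded) pull-in times, so passing to $\frac1\tau\log$ and $\tau\to\infty$ this error disappears.

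The key quantitative step — and the place I expect the main obstacle — is establishing a reverse relation of the form $a_{\tau}(f,K,N_{\varepsilon_1}(Q))\leq C\cdot a_{\tau+c}(f,K,N_{\varepsilon_2}(Q))$ that holds for a \emph{cofinal} sequence of $\tau$, so that a $\liminf$ over $\tau$ at the finer scale $\varepsilon_1$ controls a $\limsup$-type quantity at the coarser scale $\varepsilon_2$. The mechanism is the ``shift and reuse'' trick (as in the isolation argument earlier in the paper): if $\mathcal{S}$ is a $(\tau,K',N_{\varepsilon_2}(Q))$-spanning set and the system near $\overline{D_\rho}$ is approximately controllable with $\overline{D_\rho}\subset N_{\varepsilon_1}(Q)$, then for any $x\in K$ one first uses a short control to enter $\mathrm{int}(D_\rho)$, then picks $\omega\in\mathcal{S}$, and the concatenation keeps the trajectory in $N_{\varepsilon_1}(Q)$ for a time comparable to $\tau$; the subadditivity of $\tau\mapsto a_\tau$ on the compact controlled invariant set $Q$ then lets one absorb the discrepancy between $\liminf$ and $\limsup$. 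Finally one takes $\varepsilon\searrow0$ on both sides, using $\rho\to0$ via condition (iii), to conclude $P_{out}(f,K,Q)\leq\lim_{\varepsilon\searrow0}\liminf_{\tau\to\infty}\frac1\tau\log a_\tau(f,K,N_\varepsilon(Q))$, which together with the trivial inequality finishes the proof.
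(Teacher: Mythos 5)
Your admissibility argument and the trivial inequality ($\geq$, from $\limsup\geq\liminf$ and monotonicity) are fine, and your overall strategy (nested control sets $D_\rho$, approximate controllability, concatenation with uniformly bounded connecting times contributing only an additive error of order $\tau_M\sup|f|$) is the right one — it is indeed the route of Kawan's Proposition 2.16 that the paper follows. But the key step is not carried out, and the way you set it up would not close the argument. The steering goes in the wrong direction: what is needed is not to pull points of $K$ \emph{into} $\mathrm{int}(D_\rho)$ at the start, but to choose $\varepsilon_n$ with $\overline{N_{\varepsilon_n}(Q)}\subset D_{\rho_n}$ and use approximate controllability of $D_{\rho_n}$ (with the larger range $U_{\rho_n}$) to steer every point of $\overline{N_{\varepsilon_n}(Q)}$ \emph{back into} $\mathrm{int}\,K$ with finitely many controls of duration at most $\tau_M^n$, staying inside $D_{\rho_n}\subset N_{1/n}(Q)$ by the no-return property. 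Appending these return controls to each element of a $(\tau,K,N_\varepsilon(Q))$-spanning set produces controls that keep trajectories in $N_{1/n}(Q)$ on $[0,\tau]$ \emph{and end in $K$}, and it is precisely this last property that allows the construction to be iterated $m$ times, yielding $a_{m\tau}(f,K,N_{1/n}(Q))\leq e^{2m\tau_M^n\sup f}\bigl(a_\tau(f,K,N_\varepsilon(Q))\bigr)^m$ for every $m$ and every admissible $\tau$. Dividing by $m\tau$, letting $m\to\infty$ (which recovers the $\limsup$ defining $P_{inv}(f,K,N_{1/n}(Q))$ by time discretization) and only then taking $\liminf_\tau$ of the right-hand side — which holds for \emph{every} $\tau$ — gives the bound by the $\liminf$ at the finer scale.

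Your substitute for this iteration — a single-step comparison $a_\tau(f,K,N_{\varepsilon_1}(Q))\leq C\,a_{\tau+c}(f,K,N_{\varepsilon_2}(Q))$ along a cofinal sequence of $\tau$, plus ``subadditivity'' — does not suffice: such a two-sided comparison only shows that the two scales have comparable $\limsup$'s and comparable $\liminf$'s, not that a $\limsup$ is bounded by a $\liminf$. Moreover, the subadditivity proposition in the paper applies only to $a_\tau(f,Q)$ with $K=Q$ compact and controlled invariant; it does not apply to $a_\tau(f,K,N_\varepsilon(Q))$ with $K\subsetneq N_\varepsilon(Q)$, because concatenation requires the trajectory to return to $K$ at the end of each block. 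Establishing that approximate submultiplicativity is exactly what the return-to-$K$ construction provides, so this is the genuine missing idea rather than a routine detail you can defer.
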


\begin{proof}
First observe that (by the Tietze extension theorem) every continuous function
$f\in C(U,\mathbb{R})$ can be extended to a continuous function $f\in
C(U_{1},\mathbb{R})$. We fix such an extension. Our proof will show that
$P_{out}(f,K,Q)$ does not depend on this extension.

From conditions (ii) and (iii) of inner control sets, it follows that exists a
monotonically decreasing sequence $(\rho_{n})_{n\in\mathbb{N}}$ in $[0,1)$
with $D_{\rho_{n}}\subset N_{1/n}(Q)$ for all $n\in\mathbb{N}$. Since
$Q=\overline{D}\subset\mathrm{int}(D_{\rho_{n}})$ for all $n\in\mathbb{N}$, we
can find a monotonically decreasing sequence $(\varepsilon_{n})_{n\in
\mathbb{N}}$ of positive real numbers with $\varepsilon_{n}\searrow0$ such
that $\overline{N_{\varepsilon_{n}}(Q)}\subset D_{\rho_{n}}$ for all
$n\in\mathbb{N}$. For each $n\in\mathbb{N}$ it is possible to steer all points
of $N_{\varepsilon_{n}}(Q)$ to $K$ with finitely many control functions using
the control range $U_{\rho_{n}}$. In fact, since $\overline{N_{\varepsilon
_{n}}(Q)}$ and $K$ are subsets of the control set $D_{\rho_{n}}$ for each $n$,
then for all $x\in\overline{N_{\varepsilon_{n}}(Q)}$, there exist $t_{x}%
^{n}>0$ and $\mu_{x}^{n}\in\mathcal{U}$, $\mu_{x}^{n}(t)\in U_{\rho_{n}}$ for
all $t$, such that $\varphi(t_{x}^{n},x,\mu_{x}^{n})\in\mathrm{int}K$ by the
approximate controllability of the control set $D_{\rho_{n}}$. Continuity
implies that there exists a neighborhood $W_{x}^{n}$ of $x$ such that
$\varphi(t_{x}^{n},W_{x}^{n},\mu_{x}^{n})\subset\mathrm{int}K$. By compactness
there exist $x_{1}^{n},\dotsc,x_{k_{n}}^{n}\in\overline{N_{\varepsilon_{n}%
}(Q)}$ such that%
\[
\overline{N_{\varepsilon_{n}}(Q)}\subset\bigcup_{i=1}^{k_{n}}W_{x_{i}}^{n}.
\]
Denote $\mathcal{S}_{n}:=\{\mu_{1}^{n},\dotsc,\mu_{k_{n}}^{n}\}$, where
$\mu_{j}^{n}=\mu_{x_{j}}^{n}$, and $\tau_{j}^{n}:=t_{x_{j}}^{n}$. Observe that
given $x\in N_{\varepsilon_{n}}(Q)$, the trajectory $\varphi(t,x,\mu_{j}^{n}%
)$, $t\in\lbrack0,\tau_{j}^{n}]$, does not leave the control set $D_{\rho_{n}%
}\subset N_{1/n}(Q)$ by the no-return property.

For every $\tau>\tau_{M}^{n}:=\max\{\tau_{j}^{n};j=1,\dotsc k_{n}\}$ consider a
finite $(\tau,K,N_{\varepsilon}(Q))$-spanning set $\mathcal{S}=\{\omega
_{1},\dotsc,\omega_{k}\}$, where $\varepsilon\in(0,\varepsilon_{n}]$ and the
controls take values in $U_{0}$. Let $\widetilde{\mathcal{S}}$ be the set
consisting of the functions
\[
\nu_{ij}^{n}(t)=\left\{
\begin{array}
[c]{rcl}%
\omega_{i}(t), & \mbox{if} & t\in\lbrack0,\tau-\tau_{j}^{n}]\\
\mu_{j}^{n}(t-\tau_{j}^{n}), & \mbox{if} & t\in(\tau-\tau_{j}^{n},\tau]
\end{array}
\right.  \text{, }1\leq i\leq k\text{ and }1\leq j\leq k_{n}.
\]
Thus for every $x\in K$ there is a control in $\tilde{S}$ keeping the
corresponding trajectory in $N_{\varepsilon}(Q)$ up to time $\tau-\tau_{j}%
^{n}$ and then steering the system back to $K$. Now, for $m\in\mathbb{N}$,
define $\widehat{\mathcal{S}}$ as the set obtained by $m$ iterations of the
elements of $\widetilde{\mathcal{S}}$. Hence $\widehat{\mathcal{S}}$ is a
$(m\tau,K,N_{1/n}(Q))$-spanning set with $\#\widehat{\mathcal{S}}\leq\left(
\#\mathcal{S}\right)  ^{m}\cdot\left(  \#\mathcal{S}_{n}\right)  ^{m}<\infty$.

We compute for $\nu\in\widehat{\mathcal{S}}$
\begin{align*}
(S_{m\tau}f)(\nu)  &  =\int_{0}^{m\tau}f(\nu(t))dt=\int_{0}^{\tau}f(\nu
_{i_{1},j_{1}}(t))dt+\cdots+\int_{(m-1)\tau}^{m\tau}f(\nu_{i_{m},j_{m}%
}(t))dt\\
&  =\int_{0}^{\tau-\tau_{j_{1}}}f(\omega_{i_{1}}(t))dt+\int_{\tau-\tau_{j_{1}%
}}^{\tau}f(\mu_{j_{1}}^{n}(t-\tau_{j_{1}}^{n}))dt+\cdots+\\
&  ~\quad+\int_{(m-1)\tau}^{m\tau-\tau_{j_{m}}}f(\omega_{i_{m}}(t-(m-1)\tau
))dt+\int_{m\tau-\tau_{j_{m}}}^{m\tau}f(\mu_{j_{m}}^{n}(t-(m\tau-\tau_{j_{m}%
}^{n})))dt\\
&  =\int_{0}^{\tau-\tau_{j_{1}}}f(\omega_{i_{1}}(t))dt+\int_{0}^{\tau_{j_{1}}%
}f(\mu_{j_{1}}^{n}(t))dt+\cdots+\\
&  ~\quad+\int_{0}^{\tau-\tau_{j_{m}}}f(\omega_{i_{m}}(t))dt+\int_{0}%
^{\tau_{j_{m}}}f(\mu_{j_{m}}^{n}(t))dt\\
&  \leq(S_{\tau}f)(\omega_{i_{1}})+\cdots+(S_{\tau}f)(\omega_{i_{m}}%
)+2m\tau_{M}^{n}\sup f.
\end{align*}
This implies for all $(\tau,K,N_{\varepsilon}(Q))$-spanning sets $\mathcal{S}$
and $\varepsilon\in(0,\varepsilon_{n}]$%
\begin{align*}
a_{m\tau}(f,K,N_{1/n}(Q))  &  \leq\sum_{\nu\in\widehat{\mathcal{S}}%
}e^{(S_{m\tau}f)(\nu)}\\
&  \leq e^{2m\tau_{M}^{n}\sup f}\cdot\sum_{\omega_{i_{l}}\in\mathcal{S}%
;\ 1\leq l\leq m}e^{(S_{\tau}f)(\omega_{i_{1}})+\cdots+(S_{\tau}%
f)(\omega_{i_{m}})}\\
&  \leq e^{2m\tau_{M}^{n}\sup f}\cdot\left(  \sum_{\omega\in\mathcal{S}%
}e^{(S_{\tau}f)(\omega)}\right)  \cdots\left(  \sum_{\omega\in\mathcal{S}%
}e^{(S_{\tau}f)(\omega)}\right) \\
&  =e^{2m\tau_{M}^{n}\sup f}\cdot\left(  \sum_{\omega\in\mathcal{S}%
}e^{(S_{\tau}f)(\omega)}\right)  ^{m}.
\end{align*}
It follows that $a_{m\tau}(f,K,N_{1/n}(Q))\leq e^{2m\tau_{M}^{n}\sup f}%
\cdot(a_{\tau}(f,K,N_{\varepsilon}(Q)))^{m}$ for all $m\in\mathbb{N}$,
$\tau>0$ and $\varepsilon\in(0,\varepsilon_{n}]$. By discretization of time we
get
\begin{align*}
P_{inv}(f,K,N_{1/n}(Q))  &  =\limsup_{m\rightarrow\infty}\frac{1}{m\tau}\log
a_{m\tau}(f,K,N_{1/n}(Q))\\
&  \leq\limsup_{m\rightarrow\infty}\frac{1}{m\tau}(2m\tau_{M}^{n}\sup f+m\log
a_{\tau}(f,K,N_{\varepsilon}(Q)))\\
&  =\frac{2}{\tau}\tau_{M}^{n}\sup f+\frac{1}{\tau}\log a_{\tau}%
(f,K,N_{\varepsilon}(Q)).
\end{align*}
Therefore we obtain
\begin{align*}
&  P_{inv}(f,K,N_{1/n}(Q))\\
&  \leq\lim_{\varepsilon\searrow0}\liminf_{\tau\rightarrow\infty}\left(
\frac{2}{\tau}\tau_{M}^{n}\sup f+\limsup_{\tau\rightarrow\infty}\frac{1}{\tau
}\log a_{\tau}(f,K,N_{\varepsilon}(Q))\right) \\
&  =\lim_{\varepsilon\searrow0}\liminf_{\tau\rightarrow\infty}\frac{1}{\tau
}\log a_{\tau}(f,K,N_{\varepsilon}(Q)).
\end{align*}
Since this inequality holds for every $n\in\mathbb{N}$, the assertion follows.
\end{proof}

\begin{remark}
Note that it does not necessarily follow that the limit%
\[
\lim_{\tau\rightarrow\infty}\log a_{\tau}(f,K,N_{\varepsilon}(Q))
\]
exists for any $\varepsilon>0$.
\end{remark}

\section{Invariance pressure for linear control systems\label{Section6}}

In this section, we prove a main result of this paper. We consider a class of
linear control systems given by $(A,B)$ where $A$ is hyperbolic (that is, $A$
has no eigenvalues on the imaginary axis). The control range $U\subset
\mathbb{R}^{m}$ is a compact neighborhood of the origin, and we suppose that
the pair $(A,B)$ is controllable (that is, rank$\left[  B\ AB\ \cdots
\ A^{d-1}B\right]  =d$). Consequently, the system is locally accessible.

From Hinrichsen and Pritchard \cite[Theorems 6.2.22 and 6.2.23]{HiP18} (cf.
also Colonius and Kliemann \cite[Example 3.2.16]{CoKli}) we get the following
result on existence and uniqueness of a control set.

\begin{theorem}
\label{Theorem_unique}Consider a linear control system of the form
$\Sigma_{lin}$ and assume that the pair $(A,B)$ is controllable and the
control range $U$ is a compact neighborhood of the origin.

(i) Then there is a unique control set $D$ with nonempty interior, it is
convex and satisfies%
\[
0\in\mathrm{int}D\text{ and }D=\mathcal{O}^{-}(x)\cap\overline{\mathcal{O}%
^{+}(x)}\text{ for every }x\in\mathrm{int}D.
\]

(ii) $D$ is closed if and only if $\mathcal{O}^{+}(x)\subset D$ for all $x\in
D$.

(iii) The control set $D$ is bounded if and only if $A$ is hyperbolic.
\end{theorem}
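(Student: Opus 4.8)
The plan is to prove the three assertions of Theorem \ref{Theorem_unique} by translating the controllability hypotheses into the existence, convexity, and boundedness of the control set, using the reachability structure of the linear system $\Sigma_{lin}$.

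\textbf{Part (i).} First I would set $\mathcal{O}^{+} := \mathcal{O}^{+}(0)$ and $\mathcal{O}^{-} := \mathcal{O}^{-}(0)$, the reachable and controllable sets from the origin. Since $0$ is an equilibrium for $u=0$ and $U$ is a neighborhood of the origin, local accessibility (from controllability of $(A,B)$, via the Kalman rank condition applied to the linearization, which here equals the system itself) gives that $0 \in \mathrm{int}\,\mathcal{O}^{+}$ and $0 \in \mathrm{int}\,\mathcal{O}^{-}$. The key algebraic observation is that $\mathcal{O}^{+}$ and $\mathcal{O}^{-}$ are convex: for fixed $t$, the time-$t$ reachable set $\{\int_0^t e^{(t-s)A}B\omega(s)\,ds : \omega(s)\in U\}$ is convex because $U$ is convex-hull-irrelevant here — actually one should note the reachable set up to time $t$ is convex as a union over $[0,t]$ is not automatically convex, so instead I would use that $\mathcal{O}^{+}$ is a convex cone-like set: if $y_1$ reachable in time $t_1$ and $y_2$ reachable in time $t_2$, concatenating controls and using $0\in U$ to ``wait'' shows any convex combination is reachable, invoking $0\in\mathrm{int}\,U$. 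Then I set $D := \mathcal{O}^{-}\cap\overline{\mathcal{O}^{+}}$. One checks $D$ is controlled invariant and approximately controllable, hence contained in a control set; maximality follows because any control set $D'$ with nonvoid interior must contain points near $0$ by accessibility arguments and then must coincide with $D$ by the characterization $D' \subset \mathcal{O}^{-}(x)\cap\overline{\mathcal{O}^{+}(x)}$ for interior points. Uniqueness is then immediate. Convexity of $D$ follows from convexity of $\mathcal{O}^{-}$ and $\overline{\mathcal{O}^{+}}$.

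\textbf{Part (ii).} Here I would argue: if $D$ is closed, then since $D$ is controlled invariant and for each $x\in D$ we have $D\subset\overline{\mathcal{O}^{+}(x)}$, and moreover from the structure $\mathcal{O}^{+}(x) = x + \mathcal{O}^{+}$ (affine translate, by linearity of the solution formula) one gets $\mathcal{O}^{+}(x)\subset\overline{\mathcal{O}^{+}(x)}\cap(\text{controlled invariant closure})$; the precise claim is that closedness of $D$ forces $\overline{\mathcal{O}^{+}(x)} = \mathcal{O}^{+}(x)\cup(\text{boundary stuff already in }D)$, so $\mathcal{O}^{+}(x)\subset D$. Conversely, if $\mathcal{O}^{+}(x)\subset D$ for all $x\in D$, then $D = \bigcup_{x\in D}\{x\}\subset\bigcap$... more cleanly: $\overline{D}$ is then controlled invariant (limits of trajectories staying in $D$) and approximately controllable, so by maximality $\overline{D}=D$, i.e. $D$ is closed. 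This is essentially a restatement referencing \cite[Theorems 6.2.22 and 6.2.23]{HiP18}.

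\textbf{Part (iii).} This is where the hyperbolicity hypothesis enters and is the main obstacle. The plan: decompose $\mathbb{R}^d = E^+\oplus E^-$ into the unstable and stable subspaces of $A$. If $A$ is hyperbolic, I would show $D$ is bounded by bounding its projections onto $E^+$ and $E^-$ separately. On $E^-$ (stable), the reachable set $\mathcal{O}^{+}$ is bounded since $\int_0^\infty \|e^{sA}|_{E^-}\| \cdot \|B\| \cdot \max_{u\in U}\|u\|\,ds < \infty$; dually, on $E^+$ the controllable set $\mathcal{O}^{-}$ is bounded since backward-time integration against the contracting $e^{-sA}|_{E^+}$ converges. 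Since $D\subset\mathcal{O}^{-}\cap\overline{\mathcal{O}^{+}}$, its $E^+$-component is controlled by $\mathcal{O}^{-}$ and its $E^-$-component by $\overline{\mathcal{O}^{+}}$, giving boundedness. Conversely, if $A$ has an eigenvalue on the imaginary axis, I would exhibit unboundedness: on the corresponding center subspace, starting from $0$ with small nonzero constant control $u\in\mathrm{int}\,U$ produces trajectories whose relevant component grows without bound (e.g., for a zero eigenvalue, $\dot{z} = b u$ gives linear growth; for a purely imaginary pair with resonant forcing, secular growth), and these points lie in $\mathcal{O}^{+}$ and also, by the symmetry $x\mapsto -x$, $A\mapsto A$ type arguments or direct computation, are controllable back, so they lie in $D$, contradicting boundedness. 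The delicate point is the purely-imaginary (nonzero) eigenvalue case, where one must choose the control to resonate with the oscillation frequency to force unbounded drift; I expect this resonance construction to require the most care, and I would lean on the cited references \cite[Example 3.2.16]{CoKli} and \cite[Theorems 6.2.22 and 6.2.23]{HiP18} for the rigorous treatment.
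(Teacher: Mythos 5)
You should first be aware that the paper contains no proof of Theorem \ref{Theorem_unique}: it is imported verbatim from Hinrichsen--Pritchard \cite[Theorems 6.2.22 and 6.2.23]{HiP18} (cf. also \cite[Example 3.2.16]{CoKli}), so there is nothing internal to compare against. Your outline does follow the standard route of those sources --- set $D=\mathcal{O}^{-}(0)\cap\overline{\mathcal{O}^{+}(0)}$, use convexity of the orbits, and split along the stable/unstable subspaces for boundedness --- but as it stands it has genuine gaps rather than being a complete alternative proof.

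Concretely: in (i), convexity of $\mathcal{O}^{+}(0)$ is not established. The control range $U$ is only assumed compact, not convex, so convexity of the time-$t$ reachable sets $R_t$ is not immediate (one needs an Aumann/Lyapunov relaxation argument, or one must work with closures and the convex hull of $U$); your ``waiting'' trick using $0\in U$ only yields the nestedness $R_{t_1}\subset R_{t_2}$, not convexity of each $R_t$, and you explicitly leave the issue unresolved mid-sentence. The uniqueness step is asserted, not proved: the claim that every control set with nonvoid interior ``must contain points near $0$'' is precisely the nontrivial content of uniqueness and cannot be waved through as an ``accessibility argument''. In (ii), the forward implication ($D$ closed $\Rightarrow\mathcal{O}^{+}(x)\subset D$ for all $x\in D$) is not an argument at all --- ``boundary stuff already in $D$'' does not explain why a trajectory starting in $D$ cannot exit $\mathcal{O}^{-}(x)\cap\overline{\mathcal{O}^{+}(x)}$; this direction needs a real proof (positive invariance considerations at the boundary), and the one-dimensional example $\dot x=-x+u$ versus $\dot x=x+u$ shows it is exactly where closedness is used. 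In (iii), the hyperbolic-implies-bounded direction is essentially right (the spectral projections commute with $e^{tA}$, so the variation-of-constants integral projected to $E^{-}$, resp.\ $E^{+}$ in backward time, converges), but the converse --- exhibiting unbounded points of $D$ when $A$ has a purely imaginary eigenvalue, including the resonance construction and the verification that the resulting far-away states lie in $\mathcal{O}^{-}(0)$ as well --- is, by your own admission, deferred to the references. Since that is the only place hyperbolicity is equivalent to (rather than merely sufficient for) boundedness, deferring it leaves the characterization unproved.
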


The following result generalizes and improves \cite[Theorem 27]{Cocosa} (where
the outer invariance pressure was considered). The proof follows Kawan
\cite[Theorem 4.3]{Kawa11}, \cite[Theorem 5.1]{Kawa13}, considerably
simplified for the linear situation.

\begin{theorem}
\label{linear_main}Consider a linear control system of the form $\Sigma_{lin}$
and assume that the pair $(A,B)$ is controllable, that $A$ is hyperbolic and
the control range $U$ is a compact neighborhood of the origin in
$\mathbb{R}^{m}$. Let $D$ be the unique control set with nonempty interior and
let $f\in C(U,\mathbb{R})$.

Then for every compact set $K\subset D$ the pair $(K,D)$ is admissible and
\[
P_{inv}(f,K,D)\leq\sum_{\lambda\in\sigma(A)}\max\{0,n_{\lambda}%
\mathrm{\operatorname{Re}}(\lambda)\}+\inf\frac{1}{T}\int_{0}^{T}%
f(\omega(s))ds,
\]
where the infimum is taken over all $T>0$ and all $T$-periodic controls
$\omega(\cdot)$ with a $T$-periodic trajectory $x(\cdot)$ in $\mathrm{int}D$
such that $\{\omega(t);t\in\lbrack0,T]\}$ is contained in a compact subset of
$\mathrm{int}U$.
\end{theorem}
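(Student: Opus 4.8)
The plan is to reduce the estimate to two separable contributions: the $n_\lambda\operatorname{Re}(\lambda)$ terms, which are controlled by the volume growth of the linearized flow $e^{tA}$ along a periodic reference orbit, and the potential term $\frac1T\int_0^T f(\omega(s))\,ds$, which is picked up along that same periodic control. First I would fix a $T$-periodic control $\omega_\ast(\cdot)$ with $T$-periodic trajectory $x_\ast(\cdot)\subset\operatorname{int}D$ and $\{\omega_\ast(t)\}\subset\operatorname{int}U$ — for such a control $D$ is reachable from and to the orbit, and by Corollary \ref{cor1} the value $P_{inv}(f,K,D)$ does not depend on the particular compact $K$ with nonvoid interior, so I may take $K$ to be a small closed ball around $x_\ast(0)$ that is still contained in $\operatorname{int}D$. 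Using the no-return property of $D$ and local accessibility (controllability), I would first route an arbitrary initial condition in $K$ into a uniform ball around $x_\ast(0)$ in bounded time and with bounded potential cost, following the ``prefix'' construction already used in the proof of Proposition 5.2 above; this reduces everything to spanning from that small ball.

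The heart is the spanning-set construction. On the interval $[0,NT]$ I would build spanning controls of the form $\omega_\ast$ followed by small feedback corrections, exactly as in Kawan's linearization argument: because $\{\omega_\ast(t)\}$ lies in a compact subset of $\operatorname{int}U$, there is room to add perturbations $v(\cdot)$ with values still in $U$, and the difference flow is governed by the variational equation $\dot y = Ay + Bv$, i.e. again by $e^{tA}$. The number of such corrective controls needed to span a ball of radius $\delta$ at each period, keeping the trajectory inside $N_\varepsilon(D)$, grows like the product of the expansion factors of $e^{TA}$ on the unstable directions; over $N$ periods this yields roughly $\exp\bigl(NT\sum_\lambda\max\{0,n_\lambda\operatorname{Re}\lambda\}\bigr)$ many controls (here hyperbolicity of $A$ makes the center part empty, which is why only $\max\{0,n_\lambda\operatorname{Re}\lambda\}$ appears and the sum is finite). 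Each such control has potential cost $(S_{NT}f)(\omega)=\int_0^{NT} f(\omega_\ast(t))\,dt + O(N)\cdot\|f\|_\infty\cdot(\text{size of corrections})$; since the corrections act on arbitrarily short sub-intervals per period as $\delta\to0$, and the main term is $N\int_0^T f(\omega_\ast(s))\,ds$, taking logs, dividing by $NT$, and letting $N\to\infty$ gives
\[
P_{out}(f,K,D)\le \sum_{\lambda\in\sigma(A)}\max\{0,n_\lambda\operatorname{Re}(\lambda)\}+\frac1T\int_0^T f(\omega_\ast(s))\,ds,
\]
and then I would take the infimum over all admissible $(T,\omega_\ast)$. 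To pass from $P_{out}$ to $P_{inv}$ I would invoke that $D=\overline{D}$ is an inner control set (here one uses that $U$ is a neighborhood of the origin to shrink the control range and get the nested family $D_\rho$), so by Proposition 5.2 the limit inferior and limit superior agree and $P_{inv}(f,K,D)=P_{out}(f,K,D)$; alternatively, since $D$ is a control set with the no-return property one can argue directly that the inner estimate above already bounds $P_{inv}$.

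The main obstacle I expect is the bookkeeping in the perturbation step: one must choose the corrective controls $v$ with values genuinely in $U$ (this is where the hypothesis $\{\omega_\ast(t)\}\subset\subset\operatorname{int}U$ is essential), verify that the perturbed trajectories remain in $N_\varepsilon(D)$ — using convexity of $D$ and $0\in\operatorname{int}D$ to absorb small errors — and check that the number of corrections really is governed by the determinant/volume growth of $e^{TA}$ restricted to the unstable subspace rather than by some larger quantity. Controlling the potential contribution of the corrections is comparatively routine: the corrective pieces occupy a fraction of each period that can be made arbitrarily small (or their contribution is uniformly $O(\|f\|_\infty)$ per period while the dominant term scales with $N$), so after dividing by $NT$ they wash out. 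The reduction of general $f\in C(U,\mathbb R)$ to $f\ge0$ via $\tilde f = f-\inf f$ and Proposition \ref{propert}(iii), already used above, handles sign issues in the exponents.
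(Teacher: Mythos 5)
Your proposal matches the paper's proof in all essentials: fix a $T$-periodic control--trajectory pair in $\mathrm{int}\,U\times\mathrm{int}\,D$, take $K$ to be a small cube/ball about $x_\ast(0)$ (justified by Corollary \ref{cor1}), span over $[0,NT]$ by $\omega_\ast$ plus small exact-steering corrections obtained from controllability, count these corrections by the expansion of $e^{TA}$ on the Lyapunov spaces with nonnegative real part (giving $\sum_\lambda\max\{0,n_\lambda\operatorname{Re}\lambda\}$), and absorb the corrections' contribution to $(S_{NT}f)$ into an $\varepsilon$-term before taking the infimum over $(T,\omega_\ast)$. The only superfluous element is the detour through $P_{out}$ and inner control sets: the paper keeps the perturbed trajectories inside $D$ itself via the no-return property, exactly as in your ``alternatively'' clause, so the direct bound on $P_{inv}$ is all that is needed.
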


\begin{proof}
We will construct a compact subset $K\subset D$ with nonvoid interior such
that
\[
P_{inv}(f,K,D)\leq\sum_{\lambda\in\sigma(A)}\max\{0,n_{\lambda}%
\mathrm{\operatorname{Re}}(\lambda)\}+\inf\frac{1}{T}\int_{0}^{T}f(\omega
_{0}(s))ds.
\]
Then the assertion will follow, since every compact subset of $D$ is contained
in a compact subset $K$ of $D$ with nonvoid interior and the invariance
pressure is independent of the choice of such a set $K$ by Corollary
\ref{cor1}$.$

For the proof consider a $\tau_{0}$-periodic control $\omega_{0}(\cdot)$ with
$\tau_{0}$-periodic trajectory as in the statement of the theorem. We can
transform $A$ into real Jordan form $R$ without changing the invariance
pressure, cf. Example \ref{Example12}, and%
\begin{equation}
x_{0}=e^{R\tau_{0}}x_{0}+\int_{0}^{\tau_{0}}e^{R(\tau_{0}-s)}B\omega_{0}(s)ds.
\label{periodic}%
\end{equation}

\textbf{Step 1:} Choose a basis $B$ of $\mathbb{R}^{d}$ adapted to the real
Jordan structure of $R$ and let $L_{1}(R),\dotsc,L_{r}(R)$ be the different
Lyapunov spaces of $R$, that is, the sums of the generalized eigenspaces
corresponding to eigenvalues with the same real part $\rho_{j}$. Then we have
the decomposition%
\[
\mathbb{R}^{d}=L_{1}(R)\oplus\cdots\oplus L_{r}(R).
\]
Let $d_{j}=\dim L_{j}(R)$ and denote the restriction of $R$ to $L_{j}(R)$ by
$R_{j}$. Now take an inner product on $\mathbb{R}^{d}$ such that the basis $B$
is orthonormal with respect to this inner product and let $\left\Vert
\cdot\right\Vert $ denote the induced norm.

\textbf{Step 2:} We fix some constants: Let $S_{0}$ be a real number which
satisfies%
\[
S_{0}>\sum\limits_{j=1}^{r}\max(0,d_{j}\rho_{j})
\]
and choose $\xi=\xi(S_{0})>0$ such that%
\[
0<d\xi<S_{0}-\sum\limits_{j=1}^{r}\max(0,d_{j}\rho_{j}).
\]
Let $\delta\in(0,\xi)$ be chosen small enough such that $\rho_{j}<0$ implies
$\rho_{j}+\delta<0$ for all $j$. It follows that there exists a constant
$c=c(\delta)\geq1$ such that for all $j\ $and for all $k\in\mathbb{N}$%
\[
\left\Vert e^{tR_{j}}\right\Vert \leq ce^{(\rho_{j}+\delta)t}\text{ for all
}t\geq0.
\]
For every $t>0$ we define positive integers%
\[
M_{j}(t)=\left\{
\begin{array}
[c]{ccc}%
\left\lfloor e^{(\rho_{j}+\xi)t}\right\rfloor +1 & \text{if} & \rho_{j}\geq0\\
1 & \text{if} & \rho_{j}<0
\end{array}
\right.  .
\]
Moreover, we define a function $\beta:(0,\infty)\rightarrow(0,\infty)$ by%
\[
\beta(t)=\max_{1\leq j\leq r}\left[  e^{(\rho_{j}+\delta)t}\frac{\sqrt{d_{j}}%
}{M_{j}(t)}\right]  ,t>0.
\]
If $\rho_{j}<0$, then $\rho_{j}+\delta<0$ and $M_{j}(t)\equiv1$. This implies
that $e^{(\rho_{j}+\delta)t}/M_{j}(t)$ converges to zero for $t\rightarrow
\infty$. If $\rho_j\geq0$, we have $M_{j}(t)\geq e^{(\rho_{j}+\xi)t}$ and hence
\begin{equation}
e^{(\rho_{j}+\delta)t}\frac{\sqrt{d_{j}}}{M_{j}(t)}\leq e^{(\rho_{j}+\delta
)t}\frac{\sqrt{d_{j}}}{e^{(\rho_{j}+\xi)t}}=e^{(\delta-\xi)t}\sqrt{d_{j}}.
\label{beta2}%
\end{equation}
Since $\delta\in(0,\xi)$, we have $\delta-\xi<0$ and hence the terms above
converge to zero for $t\rightarrow\infty$. Thus, also $\beta(t)\rightarrow0$
for $t\rightarrow\infty$. Since we assume controllability of $(A,B)$ there
exists $C>0$ such that for every $\lambda\in\mathbb{R}^{d}$ there is a control
$\omega\in L^{\infty}(0,\tau,\mathbb{R}^{m})$ with%
\begin{equation}
\varphi(\tau_{0},\lambda,\omega)=e^{R\tau_{0}}\lambda+\int_{0}^{\tau_{0}%
}e^{R(\tau_{0}-s)}B\omega(s)ds=0\text{ and }\left\Vert \omega\right\Vert
_{\infty}\leq C\left\Vert \lambda\right\Vert . \label{Kawan5.9}%
\end{equation}
The inequality follows by the inverse mapping theorem.

For $b_{0}>0$ let $\mathcal{C}$ be the $d$-dimensional compact cube
$\mathcal{C}$ in $\mathbb{R}^{d}$ centered at the origin with sides of length
$2b_{0}$ parallel to the vectors of the basis $B$. Choose $b_{0}$ small enough
such that, with $x_{0}:=x(0)$
\[
K:=x_{0}+\mathcal{C}\subset D
\]
and $\overline{B(\omega_{0}(t),Cb_{0})}\subset U$ for almost all $t\in
\lbrack0,\tau_{0}]$. This is possible, since $x_{0}\in\mathrm{int}D$ and
almost all values $\omega_{0}(t)$ are in a compact subset of the interior of
$U$.

\textbf{Step 3.} Let $\varepsilon>0$ and $\tau=k\tau_{0}$ with $k\in
\mathbb{N}$. We may take $k\in\mathbb{N}$ large enough such that%
\begin{equation}
\frac{d}{\tau}\log2<\varepsilon. \label{two}%
\end{equation}
Furthermore, we may choose $b_{0}$ small enough such that $Cb_{0}<\varepsilon
$. Partition $\mathcal{C}$ by dividing each coordinate axis corresponding to a
component of the $j$th Lyapunov space $L_{j}(R)$ into $M_{j}(\tau)$ intervals
of equal length. The total number of subcuboids in this partition of
$\mathcal{C}$ is $\prod_{j=1}^{r}M_{j}(\tau)^{d_{j}}$.

Next we will show that it suffices to take $\prod_{j=1}^{r}M_{j}(\tau)^{d_{j}%
}$ control functions to steer the system from all states in $x_{0}%
+\mathcal{C}$ back to $x_{0}+\mathcal{C}$ in time $\tau$ such that the
controls are within distance $\varepsilon$ to $\omega_{0}$.

Let $\lambda$ be the center of a subcuboid. By (\ref{Kawan5.9}) there exists
$\omega\in L^{\infty}(0,\tau,\mathbb{R}^{m})$ such that%
\[
\varphi(\tau,\lambda,\omega)=0\text{ and }\left\Vert \omega\right\Vert _{\infty
}\leq C\left\Vert \lambda\right\Vert \leq Cb_{0}<\varepsilon.
\]
Hence $\omega(t)\in U$ for a.a. $t\in\lbrack0,\tau]$ and, using
(\ref{periodic}) and linearity, we find that $x_{0}+\lambda$ is steered by
$\omega_{0}+\omega$ in time $\tau=k\tau_{0}$ to $x_{0}$,%
\begin{equation}
\varphi(\tau,x_{0}+\lambda,\omega_{0}+\omega)=\varphi(\tau,x_{0},\omega_{0}%
)+\varphi(\tau,\lambda,\omega)=x_{0}. \label{periodic2}%
\end{equation}
Now consider an arbitrary point $x\in\mathcal{C}$. Then it lies in one of the
subcuboids and we denote the corresponding center of this subcuboid by
$\lambda$ with associated control $\omega$. We will show that $\omega
_{0}+\omega$ also steers $x_{0}+x$ back to $x_{0}+\mathcal{C}$. Observe that%
\[
\left\Vert x-\lambda\right\Vert \leq\frac{b_{0}}{M_{j}(\tau)}\sqrt{d_{j}}.
\]
This implies that%
\[
\left\Vert e^{\tau R}x-e^{\tau R}\lambda\right\Vert \leq\left\Vert
e^{(k\tau_{0}R_{j})}\right\Vert \left\Vert x-\lambda\right\Vert \leq
ce^{(\rho_{j}+\delta)k\tau_{0}}\frac{b_{0}}{M_{j}(k\tau_{0})}\sqrt{d_{j}%
}\rightarrow0\text{ for }k\rightarrow\infty,
\]
and hence for $k$ large enough $\left\Vert e^{\tau R}x-e^{\tau R}%
\lambda\right\Vert \leq b_{0}$. This implies that the solution%
\[
\varphi(t,x_{0}+x,\omega_{0}+\omega)=e^{tR}(x_{0}+x)+\int_{0}^{t}e^{R(t-s)}%
B\left[  \omega_{0}(s)+\omega(s)\right]  ds,t\geq0,
\]
satisfies for $k$ large enough by (\ref{periodic2}) and linearity,%
\begin{align*}
&  \left\Vert \varphi(\tau,x_{0}+x,\omega_{0}+\omega)-x_{0}\right\Vert \\
&  =\left\Vert e^{\tau R}(x_{0}+x)+\int_{0}^{\tau}e^{R(\tau-s)}B\left[
\omega_{0}(s)+\omega(s)\right]  ds-x_{0}\right\Vert \\
&  \leq\left\Vert e^{\tau R}(x_{0}+x)-e^{\tau R}(x_{0}+\lambda)\right\Vert
+\left\Vert e^{\tau R}(x_{0}+\lambda)+\int_{0}^{\tau}e^{R(\tau-s)}B\left[
\omega_{0}(s)+\omega(s)\right]  ds-x_{0}\right\Vert \\
&  \leq\left\Vert e^{\tau R}x-e^{\tau R}\lambda\right\Vert +\left\Vert
\varphi(\tau,x_{0}+\lambda,\omega_{0}+\omega)-x_{0}\right\Vert \\
&  \leq ce^{(\rho_{j}+\delta)k\tau_{0}}\frac{b_{0}}{M_{j}(k\tau_{0})}%
\sqrt{d_{j}}\leq b_{0}.
\end{align*}
Hence we have proved that $\prod_{j=1}^{r}M_{j}(\tau)^{d_{j}}$ control
functions are sufficient to steer the system from all states in $x_{0}%
+\mathcal{C}$ back to $x_{0}+\mathcal{C}$ in time $\tau$. By the no-return
property of control sets it follows that the trajectories do not leave $D$
within the time interval $[0,\tau]$. By iterated concatenation of these
control functions we can construct an $(n\tau,K)$-spanning set $\mathcal{S}$
for each $n\in\mathbb{N}$ with cardinality%
\[
\left(  \prod_{j=1}^{r}M_{j}(\tau)^{d_{j}}\right)  ^{n}=\left(  \prod
_{j:\rho_{j}\geq0}(\left\lfloor e^{(\rho_{j}+\xi)\tau}\right\rfloor
+1)^{d_{j}}\right)  ^{n}.
\]
It follows that%
\begin{align*}
\log a_{n\tau}(f,K,Q)  &  \leq\log\sum_{\omega\in\mathcal{S}}e^{(S_{n\tau
}f)(\omega)}=\log\sum_{\omega\in\mathcal{S}}e^{\int_{0}^{n\tau}f(\omega
(t))dt}\\
&  =\log\sum_{\omega\in\mathcal{S}}e^{\int_{0}^{n\tau}f(\omega_{0}%
(t))dt+\int_{0}^{n\tau}[f(\omega(t))-f(\omega_{0}(t))]dt}\\
&  \leq\log\left[  \sum_{\omega\in\mathcal{S}}e^{\int_{0}^{n\tau}f(\omega
_{0}(t))dt}\cdot e^{\int_{0}^{n\tau}\varepsilon dt}\right]  .
\end{align*}
This implies
\begin{align*}
\frac{1}{n\tau}\log a_{n\tau}(f,K,Q)  &  \leq\frac{1}{\tau}\sum_{j:\rho
_{j}\geq0}d_{j}\log(\left\lfloor e^{(\rho_{j}+\xi)\tau}\right\rfloor
+1)+\frac{1}{n\tau}\int_{0}^{n\tau}f(\omega_{0}(t))dt+\varepsilon\\
&  \leq\frac{1}{\tau}\sum_{j:\rho_{j}\geq0}d_{j}\log(2e^{(\rho_{j}+\xi)\tau
})+\frac{1}{\tau_{0}}\int_{0}^{\tau_{0}}f(\omega_{0}(t))dt+\varepsilon\\
&  \leq\frac{d}{\tau}\log2+\frac{1}{\tau}\sum_{j:\rho_{j}\geq0}d_{j}(\rho
_{j}+\xi)\tau+\frac{1}{\tau_{0}}\int_{0}^{\tau_{0}}f(\omega_{0}%
(t))dt+\varepsilon\\
&  \leq\frac{d\xi}{\tau}+\sum_{j:\rho_{j}\geq0}d_{j}\rho_{j}+\frac{1}{\tau
_{0}}\int_{0}^{\tau_{0}}f(\omega_{0}(t))dt+2\varepsilon\\
&  <S_{0}+\frac{1}{\tau_{0}}\int_{0}^{\tau_{0}}f(\omega_{0}(t))dt+2\varepsilon
.
\end{align*}
Here we have also used (\ref{two}). Since $\varepsilon$ can be chosen
arbitrarily small and $S_{0}$ arbitrarily close to $\sum_{j=1}^{r}\max
(0,d_{j}\rho_{j})$, the assertion of the theorem follows.
\end{proof}

In order to see the relation to Floquet exponents the following simple result
is helpful.

\begin{proposition}
Consider a $\tau_{0}$-periodic solution of%
\[
\dot{x}(t)=Ax(t)+Bu(t).
\]
Then the Floquet exponents of the linearized system (linearized with respect
to $x$) are given by the real parts of the eigenvalues of $A$ and also the
algebraic multiplicities coincide. More generally, the Lyapunov exponents are
given by%
\[
\lim_{t\rightarrow\infty}\frac{1}{t}\log\left\Vert D_{x}\varphi
(t,x,u)y\right\Vert =\lim_{n\rightarrow\infty}\frac{1}{nT}\log\left\Vert
e^{AnT}x\right\Vert =\lambda,
\]
depending on $y$.
\end{proposition}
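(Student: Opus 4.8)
The plan is to show that for a $\tau_0$-periodic solution $x(\cdot)$ of $\dot{x}=Ax+Bu$, the linearization in $x$ reduces to the autonomous equation $\dot{y}=Ay$, since $B u(t)$ does not depend on $x$. Concretely, differentiating $\varphi(t,x,u)$ with respect to the initial condition $x$, the variational equation is $\dot{Y}(t)=A\,Y(t)$ with $Y(0)=I$, whose fundamental solution is $Y(t)=e^{tA}$, independent of the periodic reference trajectory. Hence the monodromy (period) matrix is $Y(\tau_0)=e^{\tau_0 A}$.

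Next I would recall the definition of Floquet exponents: they are the numbers $\mu$ with $e^{\tau_0\mu}$ an eigenvalue of the monodromy matrix $e^{\tau_0 A}$, counted with algebraic multiplicity. If $\lambda$ is an eigenvalue of $A$ with algebraic multiplicity $n_\lambda$, then $e^{\tau_0\lambda}$ is an eigenvalue of $e^{\tau_0 A}$ with the same algebraic multiplicity (spectral mapping theorem for the matrix exponential, respecting the Jordan structure). Since the Floquet exponents are only defined modulo $2\pi i/\tau_0$ and here one conventionally takes real parts, $\operatorname{Re}\mu=\operatorname{Re}\lambda$, with multiplicities inherited. This gives the first assertion.

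For the Lyapunov-exponent statement I would fix $y\neq 0$ and observe that $D_x\varphi(t,x,u)y=e^{tA}y$, so
\[
\lim_{t\to\infty}\frac{1}{t}\log\|D_x\varphi(t,x,u)y\|=\lim_{t\to\infty}\frac{1}{t}\log\|e^{tA}y\|,
\]
which along the subsequence $t=nT$ (with $T=\tau_0$) equals $\lim_{n\to\infty}\frac{1}{nT}\log\|e^{nTA}y\|$. Decomposing $y$ along the Lyapunov spaces of $A$ as in Step~1 of the preceding theorem, the dominant block determines this limit: if $y$ has a nonzero component in the Lyapunov space associated with real part $\rho_j$ and none in any space with larger real part, then the limit equals $\rho_j=\operatorname{Re}\lambda$ for the corresponding eigenvalue $\lambda$. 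This is the standard fact that $\frac{1}{t}\log\|e^{tA}y\|\to\operatorname{Re}\lambda$ with $\lambda$ the eigenvalue of largest real part whose generalized eigenspace meets $y$ nontrivially; the polynomial factors coming from nontrivial Jordan blocks contribute $O(\log t)/t\to 0$ and do not affect the exponential rate.

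The only mild subtlety — and the point I would be most careful about — is matching algebraic multiplicities under the exponential map: one must note that $e^{tA}$ and $A$ are simultaneously block-triangularizable in a Jordan basis, so an eigenvalue $\lambda$ of $A$ with a Jordan block of size $k$ produces a Jordan block of size $k$ for $e^{tA}$ at eigenvalue $e^{t\lambda}$ (the exponential of a single Jordan block is an upper-triangular matrix whose strictly-upper part is nonzero, hence again a single Jordan block), so algebraic multiplicities of $A$ at $\lambda$ and of $e^{\tau_0 A}$ at $e^{\tau_0\lambda}$ agree, after grouping eigenvalues of $A$ that differ by integer multiples of $2\pi i/\tau_0$. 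Everything else is a direct computation; no additional results beyond the linear solution formula and elementary linear algebra are needed.
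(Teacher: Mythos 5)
Your proposal is correct and follows essentially the same route as the paper: the paper simply observes that $D_{x}\varphi(\tau_{0},x,u)=e^{A\tau_{0}}$ (read off from the explicit solution formula, which is the same content as your variational equation $\dot{Y}=AY$) and invokes the spectral mapping theorem. Your additional details on Jordan-block multiplicities and on the Lyapunov-exponent computation via the dominant Lyapunov space are correct elaborations of what the paper leaves implicit.
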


\begin{proof}
We have to analyze the eigenvalues of the linearization of the map
$x\mapsto\varphi(\tau_{0},x,u)=e^{A\tau_{0}}x+\int_{0}^{\tau_{0}}e^{A(\tau
_{0}-s)}Bu(s)ds$ given by $D_{x}\varphi(\tau_{0},x,u)=e^{A\tau_{0}}$. Thus the
assertion is a consequence of the spectral mapping theorem.
\end{proof}

This proposition shows that%
\[
\sum_{\lambda\in\sigma(A)}\max\{0,n_{\lambda}\mathrm{\operatorname{Re}%
}(\lambda)\}=\sum_{j=1}^{r}\max\{0,d_{j}\rho_{j}\},
\]
where $\rho_{1},\dotsc,\rho_{r}$ are the different Lyapunov exponents with
corresponding multiplicities of a periodic solution corresponding to a
periodic control. This is the term occurring in the estimate for the
invariance entropy in Kawan \cite[Theorem 5.1]{Kawa13}.

\begin{corollary}
\label{theorem20}Consider a linear control system of the form $\Sigma_{lin}$
and assume that the pair $(A,B)$ is controllable, that $A$ is hyperbolic and
the control range $U$ is a compact neighborhood of the origin. Let $D$ be the
unique control set, let $f\in C(U,\mathbb{R})$ and suppose that $\min
_{\omega\in U}f(\omega)=f(\omega_{0})$ with $\omega_{0}\in\mathrm{int}U$ and
there exists $x_{0}\in\mathrm{int}D$ with $Ax_{0}+B\omega_{0}=0$.

Then for every compact set $K\subset D$ with nonempty interior we have that
$(K,D)$ is an admissible pair and
\[
P_{inv}(f,K,D)=\sum_{\lambda\in\sigma(A)}\max\{0,n_{\lambda}\mbox{Re}(\lambda
)\}+f(\omega_{0}).
\]

\end{corollary}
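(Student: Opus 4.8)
The idea is to sandwich $P_{inv}(f,K,D)$ between two matching bounds: the upper bound of Theorem~\ref{linear_main} evaluated at a suitable periodic control, and a lower bound obtained from Proposition~\ref{propert} together with the classical value of the invariance entropy of a linear control set.

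\emph{Upper bound.} I would apply Theorem~\ref{linear_main} with the trivial periodic control associated to the equilibrium $x_{0}$. Indeed, $Ax_{0}+B\omega_{0}=0$ with $x_{0}\in\mathrm{int}\,D$ means exactly that the constant control $\omega(\cdot)\equiv\omega_{0}$ produces the constant trajectory $x(\cdot)\equiv x_{0}$; this trajectory is $T$-periodic for every $T>0$ and stays in $\mathrm{int}\,D$, and since $\omega_{0}\in\mathrm{int}\,U$ the singleton $\{\omega(t):t\in[0,T]\}=\{\omega_{0}\}$ is a compact subset of $\mathrm{int}\,U$. Thus $(\omega,x)$ is admissible in the infimum of Theorem~\ref{linear_main}, and for it $\frac{1}{T}\int_{0}^{T}f(\omega(s))\,ds=f(\omega_{0})$. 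Hence, for every compact $K\subset D$,
\[
P_{inv}(f,K,D)\le\sum_{\lambda\in\sigma(A)}\max\{0,n_{\lambda}\operatorname{Re}(\lambda)\}+f(\omega_{0}),
\]
and admissibility of $(K,D)$ is also part of Theorem~\ref{linear_main}.

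\emph{Lower bound.} Because $U$ is compact and $f$ is continuous, the hypothesis $\min_{\omega\in U}f(\omega)=f(\omega_{0})$ gives $\inf_{\omega\in U}f=f(\omega_{0})$. Proposition~\ref{propert}(i)--(ii) then yields
\[
P_{inv}(f,K,D)\ge h_{inv}(K,D)+\inf_{\omega\in U}f(\omega)=h_{inv}(K,D)+f(\omega_{0}).
\]
It remains to insert the known formula for the invariance entropy of the control set of a controllable, hyperbolic linear system: for every compact $K\subset D$ with nonempty interior one has $h_{inv}(K,D)=\sum_{\lambda\in\sigma(A)}\max\{0,n_{\lambda}\operatorname{Re}(\lambda)\}$; see Colonius and Kawan~\cite{ColKa09a} and \cite[Chapter~5]{Kawa13} (the matching upper bound is the special case $f\equiv\mathbf{0}$ of Theorem~\ref{linear_main}, and independence of the choice of such a $K$ is Corollary~\ref{cor1}). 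Combining the last two displays gives the reverse inequality, and with the upper bound the claimed equality follows.

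I do not anticipate a genuine obstacle: the argument is essentially the combination of Theorem~\ref{linear_main}, Proposition~\ref{propert} and a cited entropy formula. The two points that need a word of care are that an equilibrium of $\dot{x}=Ax+B\omega_{0}$ really is an admissible periodic solution for the infimum in Theorem~\ref{linear_main} (a constant function is $T$-periodic for every $T$, and $\omega_{0}\in\mathrm{int}\,U$ places its range in a compact subset of $\mathrm{int}\,U$), and that the lower bound $h_{inv}(K,D)\ge\sum_{\lambda\in\sigma(A)}\max\{0,n_{\lambda}\operatorname{Re}(\lambda)\}$ is imported from the literature rather than reproved, since Theorem~\ref{linear_main} alone only supplies the corresponding upper bound.
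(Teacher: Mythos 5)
Your proof is correct, and the upper bound is obtained exactly as in the paper: the equilibrium pair $(x_{0},\omega_{0})$ is a trivial periodic solution admissible in the infimum of Theorem~\ref{linear_main}, and $\frac{1}{T}\int_{0}^{T}f(\omega(s))\,ds\geq f(\omega_{0})$ for every competing periodic control, so the infimum equals $f(\omega_{0})$. Where you go beyond the paper's one-line proof is in making the lower bound explicit: the paper silently relies on the known identity $h_{inv}(K,D)=\sum_{\lambda\in\sigma(A)}\max\{0,n_{\lambda}\operatorname{Re}(\lambda)\}$ for controllable hyperbolic linear systems, whereas you spell out the chain $P_{inv}(f,K,D)\geq h_{inv}(K,D)+\inf f$ from Proposition~\ref{propert}(ii) and import the entropy formula from \cite{ColKa09a} and \cite{Kawa13} (correctly noting that this is where the hypothesis $\mathrm{int}\,K\neq\emptyset$ is actually used, via the positive-measure/volume-growth lower bound, and that $D$ is bounded by hyperbolicity). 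This makes your write-up more complete than the paper's, without changing the underlying argument.
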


\begin{proof}
This follows by Theorem \ref{linear_main}, since $(\omega_{0},x_{0})$ is a
(trivial) periodic solution in $\mathrm{int}U\times\mathrm{int}D$, and for
every $T$-periodic control $\omega(\cdot)$
\[
\frac{1}{T}\int_{0}^{T}f(\omega(s))ds\geq f(\omega_{0}).
\]

\end{proof}

\begin{example}
Consider the one-dimensional linear system given by the differential
equations
\[
\dot{x}(t)=ax(t)+\omega(t),\ \omega\in\mathcal{U},
\]
where $a>0$. We assume that the control range $U=[-1,1]$. Then the compact
interval $Q=\left[  -\frac{1}{a},\frac{1}{a}\right]  $ is the closure of the
unique control set with nonempty interior $D=\mathcal{O}^{-}(0)=\left(
-\frac{1}{a},\frac{1}{a}\right)  $ of this system.

Let $f\in C(U,\mathbb{R})$ such that $f(u_{0})=\inf f$ for some $u_{0}%
\in\mathrm{int}U$. Then $x_{0}:=-\frac{u_{0}}{a}\in\mathrm{int}D$ and
$(x_{0},u_{0})$ is an equilibrium pair. By Corollary \ref{theorem20} we have
\[
P_{inv}(f,K,Q)=\inf f+a.
\]

\end{example}

The next example (cf. Sontag \cite{Sont98}) presents an application of outer
invariance pressure to a mechanical control system and shows that, in this
case, this amount is related to the exponential growth rate of the total impulse
of external forces acting on the system.

\begin{example}
	
		Consider a pendulum to which one can apply a torque as an
					external force (see Fig. \ref{fig1}). We assume that friction is negligible, that all of the mass is concentrated at the end, and that the rod has unit length. From Newton’s law for rotating objects, there results, in terms of the variable $\alpha$ that describes the counter clockwise angle with respect to the vertical, the second-order nonlinear differential
			equation
			\begin{eqnarray}\label{ex24}
			m\ddot{\alpha}(t)+mg\sin(\alpha(t))=u(t),
			\end{eqnarray}
			where $m$ is the mass, $g$ the acceleration due to gravity, and $u(t)$ the value of the external torque at time $t$ (counter clockwise being positive).
		
		\begin{minipage}[t]{0.3\textwidth}
			\begin{tikzpicture}[baseline={([yshift={-1ex}]current bounding box.north)}]
			\tikzstyle{every node}+=[inner sep=0pt]
			\pgfmathsetmacro{\Gvec}{1.5}
			\pgfmathsetmacro{\myAngle}{30}
			\pgfmathsetmacro{\Gcos}{\Gvec*cos(\myAngle)}
			\pgfmathsetmacro{\Gsin}{\Gvec*sin(\myAngle)}
			\coordinate (centro) at (0,0);
			\phantom{\draw[dashed,black,-] (centro) -- ++ (330+\myAngle:3) node (mary) [black,below]{$ $};}
			\phantom{\draw[thick] (centro) -- ++(330+\myAngle:3) coordinate (bob);}
			\pic [draw, ->, "$u$", angle eccentricity=1.5] {angle = bob--centro--mary};
			\draw[dashed,black,-] (centro) -- ++ (0,-3.78) node (mary) [black,below]{$ $};
			\draw[thick] (centro) -- ++(280+\myAngle:3) coordinate (bob);
			\pic [draw, ->, "$\alpha$", angle eccentricity=1.3, angle radius=1cm] {angle = mary--centro--bob};
			\draw [-stealth] (bob) -- ($(bob)!\Gsin cm!90:(centro)$)
			coordinate (gsin)
			node[midway,above left] {$mg\sin\alpha$};
			\draw [-stealth] (bob) -- ++(0,-\Gvec)
			coordinate (g)
			node[near end,left] {$mg$};
			\filldraw [draw=black] (bob) circle[radius=0.15];
			\end{tikzpicture}
			
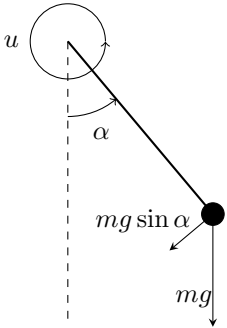
\captionof{figure}{Pendulum.}\label{fig1}
		\end{minipage}

The vertical stationary position $(\alpha,\dot{\alpha})=(\pi,0)$ is an equilibrium when the null control $\omega_{0}\equiv0$ is applied, but a small deviation from this will result in an unstable motion. Let us assume that our	objective is to apply torques as needed to correct for such deviations. For	small $\alpha-\pi$,
	\[
	\sin(\alpha)=-(\alpha-\pi)+r(\alpha-\pi),
	\]
	when $r(t)$ is a function which satisfies $\lim_{t\rightarrow0}\frac{r(t)}%
	{t}=0$.
	
	Since only small deviations are of interest, we drop the nonlinear part
	represented by the term $r(t)$. Thus, with $\gamma:=\alpha-\pi$ as a new
	variable, we replace equation (\ref{ex24}) by the linear differential
	equation
	\[
	m\ddot{\gamma}(t)-mg\gamma(t)=\omega(t).
	\]
	If we denote $x_{1}=\gamma$ and $x_{2}=\dot{\gamma}$, then we obtain%
	\[
	\Sigma_{1}~:~\left[
	\begin{array}
	[c]{c}%
	\dot{x}_{1}\\
	\dot{x}_{2}%
	\end{array}
	\right]  =\underset{=:A}{\underbrace{\left[
			\begin{array}
			[c]{cc}%
			0 & 1\\
			g & 0
			\end{array}
			\right]  }}\left[
	\begin{array}
	[c]{c}%
	x_{1}\\
	x_{2}%
	\end{array}
	\right]  \underset{=:B}{+\underbrace{\left[
			\begin{array}
			[c]{c}%
			0\\
			\frac{1}{m}%
			\end{array}
			\right]  }}\omega,~\omega(t)\in U:=[-\varepsilon,\varepsilon],\varepsilon
	>0\text{.}%
	\]
Note that the eigenvalues of $A$ are $\lambda_{\pm}=\pm\sqrt{g}$. System $\Sigma_{1}$ is via the (time-invariant) conjugacy map $(T,id_{U})$ conjugate to (cf. Example \ref{Example12})
	\[
	\Sigma_{2}~:~\left[
	\begin{array}
	[c]{c}%
	\dot{x_{1}}\\
	\dot{x_{2}}%
	\end{array}
	\right]  =\underbrace{\left[
		\begin{array}
		[c]{cc}%
		-\sqrt{g} & 0\\
		0 & \sqrt{g}%
		\end{array}
		\right]  }_{=:\widetilde{A}}\left[
	\begin{array}
	[c]{c}%
	x_{1}\\
	x_{2}%
	\end{array}
	\right]  +\underbrace{\left[
		\begin{array}
		[c]{c}%
		\frac{1}{2m}\\
		\frac{1}{2m}%
		\end{array}
		\right]  }_{=:\widetilde{B}}\omega,
	\]
because $\widetilde{A}=TAT^{-1}$ and $\widetilde{B}=TB$, where
	\[
	T=\frac{1}{2}\left[
	\begin{array}
	[c]{cc}%
	-\sqrt{g} & 1\\
	\sqrt{g} & 1
	\end{array}
	\right]  \text{ and }T^{-1}=\left[
	\begin{array}
	[c]{cc}%
	-\frac{1}{\sqrt{g}} & \frac{1}{\sqrt{g}}\\
	1 & 1
	\end{array}
	\right]  .
	\]
Note that $\widetilde{A}$ is hyperbolic and the pair $(\widetilde{A},\widetilde{B})$ is controllable. By Theorem \ref{Theorem_unique}, the unique control set $\tilde{D}$ with nonvoid interior of $\Sigma_{2}$ is
	\[
	\tilde{D}=\overline{\mathcal{O}^{+}(0)}\cap\mathcal{O}^{-}(0)=\left[
	-\frac{\varepsilon}{2m\sqrt{g}},\frac{\varepsilon}{2m\sqrt{g}}\right]
	\times\left(  -\frac{\varepsilon}{2m\sqrt{g}},\frac{\varepsilon}{2m\sqrt{g}%
	}\right)  .
	\]
Then the unique control set with nonvoid interior of $\Sigma_{1}$ is given by $D:=T(\tilde{D})$ and one computes%
	\[
	D=\left[  -d,d\right]  \times\left(  -d,d\right)  \text{ with }d:=\varepsilon
	\frac{\sqrt{g}+1}{2m\sqrt{g}}.
	\]
Here for a compact subset $K\subset Q:=D$ a $(\tau,K,Q)$-spanning set $\mathcal{S}$ represents a set of external torques $\omega$ that cause the angular position of the pendulum to remain in the interval $\left[
	-d,d\right]  $ and such that its angular velocity does not exceed $\left(
	-d,d\right)  $ when it starts in $K$. If $f(u)=\left\vert u\right\vert $,
	$u\in U=[-\varepsilon,\varepsilon]$, then $f\in C(U,\mathbb{R})$ and
	$0=f(0)=\inf f$. Note that here $(S_{\tau}f)(\omega)$ represents the impulse
	of the torque $\omega$ until time $\tau$. Hence, the invariance pressure
	$P_{inv}(f,K,D)$ measures the exponential growth rate of the amount of total
	impulse required of the external torques acting on the system to remain in $D$
	as time tends to infinity. Corollary \ref{theorem20} implies that
	$P_{inv}(f,K,D)=\sqrt{g}=h_{inv}(K,D)$. The reason is that within the control set $D$ one
	may steer the system from $K$ arbitrarily close to the equilibrium
	$0\in\mathbb{R}^{2}$ and keep it there with arbitrarily small torque.
\end{example}

\section*{Acknowledgments}
The authors appreciate the unknown referee’s valuable and profound comments.

\end{document}